\theoremstyle{definition}
\newtheorem{definition}{Definition}
\theoremstyle{plain}
\newtheorem{theorem}[definition]{Theorem}
\newtheorem{proposition}[definition]{Proposition}
\newtheorem{lemma}[definition]{Lemma}
\theoremstyle{remark}
\newtheorem{example}[definition]{Example}
\newtheorem*{acknowledgement}{Acknowledgement}
\title{Yamabe Invariants and the $\Pin^-(2)$-monopole Equations}
\author[M. Ishida]{Masashi Ishida}
\address{Department of Mathematics, Graduate School of Science,
Osaka University, 1-1, Machikaneyama, Toyonaka, Osaka, 560-0043, Japan}
\email{ishida@math.sci.osaka-u.ac.jp }
\author[S. Matsuo]{Shinichiroh Matsuo}
\address{Department of Mathematics, Graduate School of Science,
Osaka University, 1-1, Machikaneyama, Toyonaka, Osaka, 560-0043, Japan}
\email{matsuo@math.sci.osaka-u.ac.jp }
\author[N. Nakamura]{Nobuhiro Nakamura}
\address{Osaka Medical College, 2-7 Daigaku-machi, Takatsuki City, Osaka, 569-8686, Japan}
\email{nobuhironakamura00@gmail.com}
\subjclass[2010]{57R57, 53C21, 53C25, 53C44}
\newcommand{\Z}{\mathbb{Z}}
\newcommand{\R}{\mathbb{R}}
\newcommand{\U}{\mathrm{U}}
\newcommand{\Sp}{\mathrm{Sp}}
\newcommand{\Pin}{\mathrm{Pin}}
\newcommand{\Spin}{\mathrm{Spin}}
\DeclareMathOperator{\rank}{rank}
\DeclareMathOperator{\Hom}{Hom}
\DeclareMathOperator{\SW}{SW}
\newcommand{\metrics}{\mathcal{M}}
\newcommand{\dv}{d\mu}
\newcommand{\EH}{E}
\DeclareMathOperator{\Ric}{Ric}
\newcommand{\tlRicci}{\overset{\circ}{r}}
\newcommand{\yinv}{\mathcal{Y}}
\newcommand{\infs}{\mathcal{I}_s}
\newcommand{\CP}{\mathbb{C}\mathrm{P}}
\begin{document}

\begin{abstract}
  We compute the Yamabe invariants for a new infinite class of closed $4$-dimensional manifolds by using a ``twisted'' version of the Seiberg-Witten equations, the $\Pin^-(2)$-monopole equations.
  The same technique also provides a new obstruction to the existence of Einstein metrics or long-time solutions of the normalised Ricci flow with uniformly bounded scalar curvature.
\end{abstract}
\maketitle

\section{Introduction}
  The Yamabe invariant is a diffeomorphism invariant of smooth manifolds, which arises from a variational problem for the total scalar curvature of Riemannian metrics.
  The $\Pin^-(2)$-monopole equations are a ``twisted'' version of the Seiberg-Witten equations.
  In this paper we will compute the Yamabe invariants for a new infinite class of closed $4$-dimensional manifolds by using the $\Pin^-(2)$-monopole equations.
  
  We begin by recalling the Yamabe invariant.
  Let $X$ be a closed, oriented, connected manifold of $\dim X = m \ge 3$, and $\metrics (X)$ the space of all smooth Riemannian metrics on $X$.
  For each metric $g \in \metrics (X)$, we denote by $s_g$ the scalar curvature and by $\dv_g$ the volume form.
  Then the normalised Einstein-Hilbert functional $\EH_X \colon \metrics (X) \to \R$ is defined by
  \[
	\EH_X \colon g \mapsto \frac{\int_X s_g \,\dv_g}{\left( \int_X \dv_g \right)^{\frac{m-2}{m}}}.
  \]
  The classical Yamabe problem is to find a metric $\check{g}$ in a given conformal class $C$ such that the normalised Einstein-Hilbert functional attains its minimum on $C$: $\EH_X (\check{g}) = \inf_{g \in C} \EH_X (g)$.
  This minimising metric $\check{g}$ is called a Yamabe metric, and a conformal invariant $\yinv (X, C) := \EH_X(\check{g})$ the Yamabe constant.
  We define a diffeomorphism invariant $\yinv (X)$ by the supremum of $\yinv (X, C)$ of all the conformal classes $C$ on $X$:
  \[
    \yinv (X) := \sup_{C} \yinv (X, C) = \sup_{C} \inf_{g} \frac{\int_X s_g \,\dv_g}{\left( \int_X \dv_g \right)^{\frac{m-2}{m}}}
  \]
  We call it the Yamabe invariant of $X$; it is also referred to as the $\sigma$-constant.
  See \cite{MR919505} and \cite{MR994021}.
  
  It is a natural problem to compute the Yamabe invariant.  
  In dimension $4$, Seiberg-Witten theory and LeBrun's curvature estimates have played a prominent role in this problem.
  LeBrun used the ordinary Seiberg-Witten equations to compute the Yamabe invariants of most algebraic surfaces~\cites{MR1386835, MR1674105}.
  In particular, he showed that a compact K\"ahler surface is of general type if and only if its Yamabe invariant is negative.
  He also showed $\yinv (\CP^2) = 12\sqrt{2}\pi$ via the perturbed Seiberg-Witten equations \cite{MR1487727}.
  Bauer and Furuta's stable cohomotopy Seiberg-Witten invariant \cite{MR2025298} or Sasahira's spin bordism Seiberg-Witten invariant \cite{MR2284407} enable us to compute the Yamabe invariants of connected sums of some compact K\"ahler surfaces \cites{MR2032500, MR2284407, IS-1, IS-2}.
  In this paper, we will employ a recently introduced ``twisted" version of the Seiberg-Witten invariant, the $\Pin^-(2)$-monopole invariant~\cite{N2}, to compute the Yamabe invariants for a new infinite class of $4$-dimensional manifolds.
  The advantage of using this new invariant lies in the fact that it can be non-trivial even when the ordinary Seiberg-Witten invariants, the spin bordism Seiberg-Witten invariants, and the stable cohomotopy Seiberg-Witten invariants all vanish.
  Example \ref{ex: Z} lies at the heart of this paper.
  
  We now state the main theorems of this paper.
  In what follows, $\chi(X)$ and $\tau(X)$ denote the Euler number and the signature of a manifold $X$ respectively, and $mX := X \# \cdots \# X$ denotes the $m$-fold connected sum.
  
  \begin{theorem} \label{thm:yamabe}
  Let $M$ be a compact, connected, minimal K\"{a}hler surface with $b_+(M)\ge 2$ and $c^{2}_{1}(M) = 2\chi(M)+3\tau(M)\ge 0$. 
  Let $N$ be a closed, oriented, connected $4$-manifold with $b_+(N)=0$ and $\yinv(N) \ge 0$.
  Let $Z$ be a connected sum of arbitrary positive number of $4$-manifolds, each of which belongs to one of the following types:
  \begin{enumerate}
    \item $S^2 \times \Sigma$, where $\Sigma$ is a compact Riemann surface with positive genus, or
    \item $S^1\times Y$, where $Y$ is a closed oriented $3$-manifold with $\yinv(Y) \ge 0$.
  \end{enumerate}  
  The Yamabe invariant of the connected sum $M\#N\#Z$ is equal to $-4\pi \sqrt{2c^{2}_{1}(M)}$.
  \end{theorem}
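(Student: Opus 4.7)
The plan is to establish matching upper and lower bounds for $\yinv(X)$, where $X := M\#N\#Z$, by combining the $\Pin^-(2)$-monopole equations with the standard Kobayashi connected-sum principle.

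\textbf{Upper bound.} First I would equip $X$ with a $\Pin^-(2)$-monopole structure whose characteristic number recovers $c_1^2(M)$, by extending the canonical $\Spin^c$-structure on the K\"ahler surface $M$ across the connected sum using a non-trivial local coefficient class $w \in H^1(X;\Z/2)$. Such a $w$ is supplied by any admissible $Z$: by the $S^1$-factor of an $S^1\times Y$ summand, or by $H^1(\Sigma;\Z/2)\neq 0$ in a positive-genus $S^2\times\Sigma$ summand. Granting non-triviality of the resulting $\Pin^-(2)$-monopole invariant \cite{N2} (the main obstacle, discussed below), the $\Pin^-(2)$-analogue of LeBrun's scalar curvature estimate yields
\[
  \int_X s_g^2 \,\dv_g \;\ge\; 32\pi^2 c_1^2(M)
\]
for every Riemannian metric $g$ on $X$. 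Since non-vanishing also rules out a psc metric on $X$, every conformal class has non-positive Yamabe constant, and the standard $L^2$-monotonicity of scalar curvature within such classes upgrades the integral bound to $\yinv(X,[g])^2 \ge 32\pi^2 c_1^2(M)$. Taking the supremum over $[g]$ gives $\yinv(X) \le -4\pi\sqrt{2c_1^2(M)}$.

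\textbf{Lower bound.} For the reverse inequality I would iterate Kobayashi's theorem, which says that $\yinv(X_1 \# X_2) \ge \yinv(X_1)$ whenever $\yinv(X_2) \ge 0$. LeBrun's calculation of the Yamabe invariant of minimal K\"ahler surfaces with $c_1^2 \ge 0$ gives $\yinv(M) = -4\pi\sqrt{2c_1^2(M)}$, already the target value. Each of the remaining summands has non-negative Yamabe invariant: $\yinv(N) \ge 0$ by hypothesis; $\yinv(S^2\times\Sigma) > 0$ because a product metric with a sufficiently small round $S^2$-factor has positive scalar curvature for any $\Sigma$; and $\yinv(S^1\times Y) \ge \yinv(Y) \ge 0$ by taking the flat product of $S^1$ with metrics on $Y$ realising $\yinv(Y)$. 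Iterating Kobayashi across the whole connected-sum decomposition then yields $\yinv(X) \ge \yinv(M) = -4\pi\sqrt{2c_1^2(M)}$.

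\textbf{Main obstacle.} The substantive step is the non-vanishing of the $\Pin^-(2)$-monopole invariant of $X$. The ordinary Seiberg--Witten invariant of $X$ vanishes because each summand of $Z$ admits a psc metric, and the Bauer--Furuta stable cohomotopy and Sasahira spin bordism refinements are similarly killed once enough such summands are added; the whole point of the $\Pin^-(2)$-theory is that the non-trivial local coefficient system provided by $w$ prevents the corresponding psc vanishing theorem from applying. To execute a connected-sum non-vanishing argument in this twisted setting I would need to verify that the double cover of each $Z$-summand associated to $w$ still admits a non-negative scalar curvature metric, and then carry out a Bauer--Furuta-style stretching argument for the $\Pin^-(2)$-equations with local coefficients in order to reduce non-triviality on $X$ to the non-triviality on $M$ coming from its K\"ahler form. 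This is precisely the scenario of Example~\ref{ex: Z}, and I expect the bulk of the work to sit here.
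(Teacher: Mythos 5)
Your overall architecture coincides with the paper's: the upper bound on $\yinv(X)$ comes from the non-vanishing of a $\Pin^-(2)$-monopole invariant on $X$ together with the twisted LeBrun estimate $\int_X s_g^2\,\dv_g \ge 32\pi^2(\mathfrak{a}^{+_g})^2$, and the lower bound comes from connected-sum subadditivity --- the paper phrases this as $\infs(X)\le\infs(M)+\infs(N)+\infs(Z)=32\pi^2c_1^2(M)$ via \cite{MR2032500}*{Proposition 13}, which is equivalent to your iterated use of Kobayashi's theorem once one knows $\infs$ and $\yinv$ determine each other in the non-positive range. So the two routes differ only in bookkeeping.

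The genuine gap is the step you defer, and your sketch of it misses two concrete points. First, you never engage with $N$: it is an arbitrary closed oriented $4$-manifold with $b_+(N)=0$, so it may have $b_1(N)>0$ and a non-trivial negative-definite intersection form, and the canonical $\Spin^c$-structure of $M$ does not simply ``extend across the connected sum''. The paper surgers out loops in $N$ to reach $N'$ with $b_1(N')=0$, applies Fr{\o}yshov's generalised blow-up formula to absorb the diagonalising classes $e_1,\dots,e_k$ of $H^2(N')$, undoes the surgery with an Ozsv\'ath--Szab\'o-type surgery formula, and only then invokes Nakamura's gluing theorem for an irreducible $\U(1)$-monopole on $M\#N$ against a reducible $\Pin^-(2)$-monopole on $Z$; the reducibles on $Z$ exist because $b_+^{\ell_Z}(Z)=0$ for the local system of Example \ref{ex: Z}, which is the real reason the twisted theory survives where the untwisted refinements die --- not merely because the double covers of the $Z$-summands admit non-negative scalar curvature metrics. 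Second, your displayed bound $\int_X s_g^2\,\dv_g\ge 32\pi^2c_1^2(M)$ does not follow from non-vanishing for a single $\Spin^{c-}$-structure: LeBrun's estimate only gives $32\pi^2(\mathfrak{a}^{+_g})^2$, and the self-dual part $\mathfrak{a}^{+_g}$ depends on $g$. One needs the whole family of basic classes $c_1(\mathfrak{s}_M)+(\pm e_1+\dots\pm e_k)+\widetilde{c}_1(\mathfrak{s}_2)$ and, for each metric $g$, a choice of signs ensuring $(\mathfrak{a}^{+_g})^2\ge c_1^2(M)$ (this is \cite{MR2032500}*{Corollary 11}). Without these two ingredients the upper bound on $\yinv(X)$ is not established; everything else in your proposal is sound.
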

  
  \begin{theorem} \label{thm:Enriques0}
    Let $M$ be an Enriques surface.
    Let $N$ and $Z$ satisfy the assumptions in Theorem \ref{thm:yamabe}.
    The Yamabe invariant of $M \# N \# Z$ is equal to $0$.
  \end{theorem}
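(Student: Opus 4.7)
The Enriques surface $M$ satisfies $c_1^2(M) = 0$ and $b_+(M) = 1$, so neither the quantitative scalar-curvature estimate used in Theorem~\ref{thm:yamabe} (which would collapse to the trivial bound $\int s^2 \ge 0$) nor that theorem itself applies directly. The plan is therefore to establish the two inequalities $\yinv(M \# N \# Z) \le 0$ and $\yinv(M \# N \# Z) \ge 0$ separately.

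For the upper bound I would rely on Nakamura's non-vanishing of the Pin$^-$(2)-monopole invariant of the Enriques surface itself and transport it to the connected sum by the same gluing construction used in the proof of Theorem~\ref{thm:yamabe}. Because $c_1^2(M) = 0$ one does not recover any quantitative estimate, but one still obtains the qualitative Weitzenb\"ock obstruction: a positive scalar curvature metric on $M \# N \# Z$ would force the Pin$^-$(2)-monopole equations to have no solution, contradicting the non-vanishing. Hence $\yinv(M \# N \# Z) \le 0$.

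For the lower bound, the K3 surface is the free double cover of $M$, so the Ricci-flat Calabi--Yau metric on K3 descends to a Ricci-flat, hence scalar-flat, metric on $M$; in particular $\inf_g \|s_g^-\|_{L^2}(M) = 0$. The same vanishing holds for $N$ by the hypothesis $\yinv(N) \ge 0$, for each $S^2 \times \Sigma$ after shrinking the sphere factor to produce a positive scalar curvature product metric, and for each $S^1 \times Y$ after shrinking the circle factor and using $\yinv(Y) \ge 0$. Kobayashi's subadditivity formula for $\inf_g \|s_g^-\|_{L^{m/2}}^{m/2}$ under connected sum then gives $\inf_g \|s_g^-\|_{L^2}(M \# N \# Z) = 0$, which is equivalent to $\yinv(M \# N \# Z) \ge 0$.

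The hard part will be the non-vanishing of the Pin$^-$(2)-monopole invariant of $M \# N \# Z$. For $M$ alone this is due to Nakamura; the passage to the connected sum must control the contributions from $N$ and the summands of $Z$ under gluing, and must also deal with the wall-crossing phenomenon that is potentially present because $b_+(M) = 1$. The twisted coefficient system built into the Pin$^-$(2) formulation is precisely what removes the wall-crossing obstacle, and the rest of the argument should proceed along the blueprint of Theorem~\ref{thm:yamabe}.
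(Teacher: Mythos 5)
Your proposal is correct and follows the paper's own route: the paper obtains $\yinv(M\#N\#Z)\le 0$ from a $\Pin^-(2)$-basic class produced by Proposition \ref{prop:pin2-class b} (the Enriques surface satisfies its hypotheses via the K3 double cover, for which $b_+^{\ell_M}(M)\ge 2$ disposes of the wall-crossing you worry about), and obtains $\infs(M\#N\#Z)=0$, hence $\yinv(M\#N\#Z)\ge 0$, from the subadditivity $\infs(X)\le \infs(M)+\infs(N)+\infs(Z)$. The only point worth noting is that the gluing needed here is the irreducible-$\Pin^-(2)$/reducible-$\Pin^-(2)$ formula behind Proposition \ref{prop:pin2-class b}, rather than the irreducible-$\U(1)$ gluing of Proposition \ref{prop:pin2-class a} that underlies Theorem \ref{thm:yamabe}.
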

  
  The key ingredients of the proofs are Proposition \ref{prop:pin2-class a} and Proposition \ref{prop:pin2-class b}, the non-vanishing of the $\Pin^-(2)$-monopole invariants of $M \# N \# Z$.
  We emphasise that the ordinary Seiberg-Witten invariants, the spin bordism Seiberg-Witten invariants, and the stable cohomotopy Seiberg-Witten invariants all vanish if $Z$ contains at least one $S^2 \times \Sigma$ as a connected-summand.
  
  Much more subtle is the following theorem.
  In general, the moduli spaces of the $\Pin^-(2)$-monopole equations are, in contrast to ordinary Seiberg-Witten theory, not orientable, and only $\Z_2$-valued invariants are defined; these invariants are powerful enough to prove the theorems above.
  \begin{theorem} \label{thm:Enriques}
    Let $M$ be an Enriques surface.
    Let $N$ be a closed, oriented, connected $4$-manifold with $b_+(N)=0$ and $\yinv(N) \ge 0$.
    For any $m \ge 2$, the Yamabe invariant of $mM \# N$ is equal to $0$; moreover, it does not admit Riemannian metrics of non-negative scalar curvature.
  \end{theorem}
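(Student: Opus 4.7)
The statement combines three assertions: $\yinv(mM \# N) \geq 0$, $\yinv(mM \# N) \leq 0$, and the absence of metrics on $mM \# N$ with $s_g \geq 0$ pointwise. I would handle them in that order.

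For the lower bound I would use that every Enriques surface admits a Ricci-flat K\"ahler metric: Yau's theorem gives a Ricci-flat K\"ahler metric on its K3 universal cover, and this can be chosen invariant under the free $\Z_2$-action and pushed down to $M$. Hence $\yinv(M) \geq 0$, and combined with the hypothesis $\yinv(N) \geq 0$, iterated application of Petean's connected-sum theorem — any connected sum of closed manifolds of dimension $\geq 3$ with non-negative Yamabe invariants again has non-negative Yamabe invariant — delivers $\yinv(mM \# N) \geq 0$.

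For the upper bound, the plan is to establish a non-zero $\Z_2$-valued $\Pin^-(2)$-monopole invariant on $mM \# N$ for a suitable $\Pin^-(2)$-structure. Each Enriques summand carries a canonical $\Pin^-(2)$-structure arising from its non-trivial $\pi_1$, with a non-trivial associated invariant by Nakamura's computation. I would then adapt the gluing of monopole classes used for Propositions \ref{prop:pin2-class a} and \ref{prop:pin2-class b} to assemble these local classes across the connected sum; the $N$ summand contributes only through chamber considerations since $b_+(N) = 0$. Non-vanishing of this $\Z_2$ invariant guarantees a solution $(\phi, A)$ for every metric $g$, and the integrated Weitzenböck identity
\[
0 = \int_{mM\#N} \left(|\nabla_A \phi|^2 + \tfrac{s_g}{4}|\phi|^2 + \tfrac{1}{4}|\phi|^4\right) \dv_g
\]
rules out $s_g > 0$ everywhere, giving $\yinv(mM\#N) \leq 0$ and hence equality with $0$.

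For the "moreover" statement, I would suppose for contradiction that some $g$ has $s_g \geq 0$ pointwise. The Weitzenböck identity above forces $\phi \equiv 0$ on every unperturbed solution, so the moduli space consists entirely of reducibles. Since $b_+(mM \# N) = m \geq 2$ the reducible stratum has positive codimension in the configuration space, and a generic small perturbation of the equations produces an empty perturbed moduli space, contradicting the non-vanishing of the $\Z_2$-valued invariant. The principal technical obstacle will be the non-vanishing step itself: because $\Pin^-(2)$-monopole moduli spaces are non-orientable one cannot appeal to Bauer-Furuta-type connected-sum multiplicativity in its usual $\Z$-valued form, and one must track $\Z_2$-valued classes carefully through the gluing across each connected-sum neck, which is exactly what makes Theorem \ref{thm:Enriques} "much more subtle" as flagged in the introduction.
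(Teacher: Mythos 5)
Your lower bound $\yinv(mM\#N)\ge 0$ is fine and essentially matches the paper, which gets the same conclusion from $0\le \infs(mM\#N)\le m\,\infs(M)+\infs(N)=0$ together with $\yinv\le 0$. The central step of your plan, however, contains a genuine gap: you propose to ``establish a non-zero $\Z_2$-valued $\Pin^-(2)$-monopole invariant on $mM\#N$'' by gluing the classes on the Enriques summands as in Propositions \ref{prop:pin2-class a} and \ref{prop:pin2-class b}. This cannot work, for two reasons. First, the $\Z_2$-valued $\Pin^-(2)$-monopole invariants of $mM$ are \emph{trivial} for $m\ge 2$ (this is part of \cite{N2}*{Theorem 1.13}, and is stated explicitly in the introduction); any argument that only tracks $\Z_2$-valued data is therefore doomed. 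Second, the gluing formulae underlying Propositions \ref{prop:pin2-class a} and \ref{prop:pin2-class b} require all but one connected summand to have $b_+^{\ell}=0$ (that is precisely the role of $N$ and $Z$ there), whereas every Enriques summand of $mM$ has $b_+^{\ell}\ge 2$; so those gluing results do not apply to $mM$. The paper's actual route is to observe that in this situation the moduli spaces are orientable, so a refined $\Z$-valued invariant $\SW^{\Pin^-(2)}_{\Z}$ is defined, and its non-vanishing for $mM$ is a Bauer--Furuta-type theorem of Nakamura (\cite{N2}*{Theorem 1.13}), not a neck-stretching gluing; the passage from $mM$ to $mM\#N$ is then handled by the $\Z$-valued blow-up formula when $b_1(N)=0$ and by the $\Z$-valued surgery formula (Proposition \ref{prop: Z surgery formula}) when $b_1(N)>0$. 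Your closing remark identifies non-orientability as the obstacle but draws the opposite conclusion from the paper: the subtlety is not that one must work over $\Z_2$, but that $\Z_2$ is insufficient and one must verify orientability to upgrade to $\Z$.

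A secondary gap is in the ``moreover'' step. From non-vanishing of the invariant you only get, for each metric and each small generic perturbation $\eta$, an irreducible solution of the \emph{perturbed} equations, for which the Weitzenb\"ock estimate gives $|\Phi|^2\lesssim \|\eta\|$ when $s_g\ge 0$; this does not force $\Phi\equiv 0$, and letting $\eta\to 0$ merely produces a (possibly reducible) unperturbed solution, which is no contradiction. The clean argument, and the one the paper uses, is the Bourguignon--Kazdan--Warner trichotomy: a metric with $s_g\ge 0$ either deforms to one of positive scalar curvature (excluded by the non-trivial monopole class) or is Ricci-flat (excluded by the Hitchin--Thorpe inequality, since $2\chi(mM\#N)+3\tau(mM\#N)<0$).
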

  The ordinary Seiberg-Witten invariants of $mM$ are trivial; furthermore, its $\Z_2$-valued $\Pin^-(2)$-monopole invariants are also trivial \cite{N2}*{Theorem 1.13}.
  We need refined $\Z$-valued $\Pin^-(2)$-monopole invariants to prove the last theorem.

\begin{acknowledgement}
  The authors gratefully acknowledge the many helpful suggestions of the anonymous referee.
  The first author is supported in part by Grant-in-Aid for Scientific Research (C) 25400074.
  The second author is supported in part by Grant-in-Aid for Young Scientists (B) 25800045.
  The third author is supported in part by Grant-in-Aid for Scientific Research (C) 25400096.
\end{acknowledgement}

\section{The $\Pin^-(2)$-monopole equations and LeBrun's curvature estimates}\label{sec:estimate}

\subsection{The $\Pin^-(2)$-monopole equations}
  We briefly review $\Pin^-(2)$-monopole theory; for a thorough treatment, we refer the reader to \cites{MR3118618,N2}.
  
  Let $X$ be a closed, oriented, connected $4$-manifold.
  Fix a Riemannian metric $g$ on $X$.
  Let $\widetilde{X} \to X$ be an unbranched double cover, and $\ell := \widetilde{X} \times_{\{\pm 1\}} \Z$ its associated local system.
  Let $b_j^{\ell}(X) := \rank H^j(X;\ell)$ and $b_+^{\ell}(X) := \rank H^+(X;\ell)$.
  Recall that $\Pin^-(2) := \U(1) \cup jU(1) \subset \Sp(1)$ and $\Spin^{c-}(4) := \Spin(4) \times_{\{\pm 1\}} \Pin^-(2)$.
    A $\Spin^{c-}$-structure on $\widetilde{X} \to X$ is defined to be a triple $\mathfrak{s} = (P, \sigma, \tau)$, where
    \begin{itemize}
      \item $P$ is a $\Spin^{c-}(4)$-bundle on $X$,
      \item $\sigma$ is an isomorphism between $\widetilde{X}$ and $P/\Spin^c(4)$, and
      \item $\tau$ is an isomorphism between the frame bundle of $X$ and $P/\Pin^-(2)$.
    \end{itemize}
  We call the associated $\mathrm{O}(2)$-bundle $E := P/\Spin(4)$ the characteristic bundle of a $\Spin^{c-}$-structure $\mathfrak{s} = (P,\sigma,\tau)$, and denote its $\ell$-coefficient Euler class by $\widetilde{c}_1(\mathfrak{s}) \in H^2(X; \ell)$.
  If $\widetilde{X} \to X$ is trivial, any $\Spin^{c-}$-structure on $\widetilde{X} \to X$ canonically induces a $\Spin^c$-structure on $X$ \cite{N2}*{2(iv)}.
 
  $\Spin^{c-}$-structures are in many ways like $\Spin^c$-structures:
  The $\Spin^{c-}$-structure $\mathfrak{s}$ on $\widetilde{X} \to X$ determines a triple $(S^+, S^-, \rho)$, where $S^{\pm}$ are the spinor bundles on $X$ and $\rho \colon \Omega^1(X; \ell \otimes \sqrt{-1}\R) \to \Hom(S^+,S^-)$ is the Clifford multiplication.
  An $\mathrm{O}(2)$-connection $A$ on $E$ gives a Dirac operator $D_A \colon \Gamma(S^+) \to \Gamma(S^-)$. 
  Note that $F_A^+ \in \Omega^+(X; \ell \otimes \sqrt{-1}\R)$.
  The canonical real quadratic map is denoted by $q \colon S^+ \to \Omega^+(X; \ell \otimes \sqrt{-1}\R)$.

  We denote by $\mathcal{A}$ the space of $\mathrm{O}(2)$-connections on $E$.
  Let $\mathcal{C} := \mathcal{A} \times \Gamma(S^+)$ and $\mathcal{C}^* := \mathcal{A} \times (\Gamma(S^+) \setminus \{0\})$. 
  We define the $\Pin^-(2)$-monopole equations to be
  \[
	\left\{
	\begin{aligned}
		D_A \Phi &= 0 \\
		\frac{1}{2} F_A^+ &= q(\Phi)
	\end{aligned}
	\right.
  \]
  for $(A,\Phi) \in \mathcal{C}$.
  The gauge group $\mathcal{G} := \Gamma(\widetilde{X} \times_{\{\pm 1\}} \U(1))$ acts on the set of solutions of these equations; the moduli space is defined to be the set of solutions modulo $\mathcal{G}$.
  The formal dimension of the moduli space is given by
  \[
    d(\mathfrak{s}) := \frac{1}{4}\big(\widetilde{c}_1(\mathfrak{s})^2 - \tau(X)\big) - \big(b_+^{\ell}(X) - b_1^{\ell}(X) + b_0^{\ell}(X)\big).
  \]
  Note that $b_0^{\ell}(X) = 0$ if $\widetilde{X}$ is non-trivial.
  
  Let $\mathcal{B}^* := \mathcal{C}^*/\mathcal{G}$ be the irreducible configuration space.
  As in ordinary Seiberg-Witten theory, we can define the $\Pin^-(2)$-monopole invariant
  \[
    \SW^{\Pin^-(2)}(X,\mathfrak{s}) \colon H^{d(\mathfrak{s})}(\mathcal{B}^*; \Z_2) \to \Z_2
  \]
  via intersection theory on the moduli space.
  In contrast to ordinary Seiberg-Witten theory, a moduli space of solutions of the $\Pin^-(2)$-monopole equations might not be orientable, and thus the invariant is, in general, $\Z_2$-valued.
  We remark, however, that, in the case of Theorem \ref{thm:Enriques}, the moduli spaces are orientable, and we will use the refined $\Z$-valued invariant \cite{N2}*{Theorem 1.13}.
  
  \begin{example} \label{ex: sigma}
    Let $\widetilde{T^2} \to T^2$ be a non-trivial double cover, and $\ell := \widetilde{T^2} \times_{\pm 1} \Z$ its associated local system.
    Set $\Sigma := T^2 \# \cdots \# T^2$.
    The connected sum $\ell \# \cdots \# \ell$ gives a local system on $\Sigma$.
    We define a local system $\ell_{\Sigma}$ on $S^2 \times \Sigma$ by the pull-back of $\ell \# \cdots \# \ell$ by the projection $S^2 \times \Sigma \to \Sigma$.
    Then, we have
    \begin{gather*}
      b_0^{\ell_{\Sigma}}(S^2 \times \Sigma) = b_2^{\ell_{\Sigma}}(S^2 \times \Sigma) = b_4^{\ell_{\Sigma}}(S^2 \times \Sigma) = 0 \\
      b_1^{\ell_{\Sigma}}(S^2 \times \Sigma) = b_3^{\ell_{\Sigma}}(S^2 \times \Sigma) = \chi(\Sigma).
    \end{gather*}
    In particular, $b_+^{\ell_{\Sigma}}(S^2 \times \Sigma) = 0$, while $b_+(S^2 \times \Sigma) > 0$.
  \end{example}
  \begin{example} \label{ex: s1}
    Let $Y$ be a closed oriented $3$-manifold.
    Let $\widetilde{S^1} \to S^1$ be a connected double cover.
    We define a non-trivial double cover $\widetilde{S^1 \times Y} \to S^1 \times Y$ by the pull-back of $\widetilde{S^1}$ by the projection $S^1 \times Y \to S^1$, and denote by $\ell_{S^1}$ its associated local system.
    Then, we have $b_j^{\ell_{S^1}}(S^1 \times Y) =  0$ for all $j = 0, \dots, 4$.
  \end{example}
  \begin{example} \label{ex: Z}
    Let $Z$ be a connected sum
    \[
      Z := \big( S^2 \times \Sigma_1 \# \dots \# S^2 \times \Sigma_n \big) \# \big( S^1 \times Y_1 \# \dots \# S^1 \times Y_m \big),
    \]
    where each $\Sigma_j$ is a Riemann surface of positive genus and each $Y_i$ is a closed oriented $3$-manifold.
    We define a non-trivial double cover $\widetilde{Z} \to Z$ by the connected sum of $\widetilde{S^2 \times \Sigma_j}$ and $\widetilde{S^1 \times Y_i}$ in Example \ref{ex: sigma} and \ref{ex: s1}, and denote by $\ell_Z$ its associated local system.
    We emphasise that $b_+^{\ell_Z}(Z) = 0$, even if $b_+(Z) > 0$.
    It follows that $\widetilde{c}_1(\mathfrak{s})$ is a torsion class for every $\Spin^{c-}$-structure $\mathfrak{s}$ on $\widetilde{Z} \to Z$. 
    See \cite{N2}*{Theorem 1.7}.
  \end{example}

\subsection{LeBrun's curvature estimates}
  \begin{definition}
    Let $X$ be a closed, oriented, connected $4$-manifold.
    Assume that $X$ has a non-trivial double cover $\widetilde{X} \to X$ with $b_+^{\ell}(X) \ge 2$, where $\ell := \widetilde{X} \times_{\pm 1} \Z$.
    A cohomology class $\mathfrak{a} \in H^2(X; \ell) / \mathrm{Tor}$ is called a $\Pin^-(2)$-basic class if there exists a $\Spin^{c-}$-structure $\mathfrak{s}$ on $\widetilde{X} \to X$ with $\widetilde{c}_1(\mathfrak{s}) = \mathfrak{a}$ modulo torsions for which the $\Pin^-(2)$-monopole invariant is non-trivial.    
  \end{definition}
  
  As in ordinary Seiberg-Witten theory, if $X$ has a $\Pin^-(2)$-basic class, the corresponding $\Pin^-(2)$-monopole equations have at least one solution for every Riemannian metric; hence, $X$ does not admit Riemannian metrics of positive scalar curvature.
  We have, moreover, LeBrun's curvature estimates, which we will explain.
  In what follows, given a Riemannian metric $g$ on $X$, we identify $H^2(X; \ell \otimes \R)$ with the space of $\ell$-coefficient $g$-harmonic $2$-forms, and denote by $\mathfrak{a}^{+_g}$ the $g$-self-dual part of $\mathfrak{a} \in H^2(X; \ell) / \mathrm{Tor} \subset H^2(X; \ell \otimes \R)$.

  \begin{proposition} \label{prop:scalar}
     Let $X$ be a closed, oriented, connected $4$-manifold.
    Assume that $X$ has a non-trivial double cover $\pi \colon \widetilde{X} \to X$ with $b_+^{\ell}(X) \ge 2$, where $\ell := \widetilde{X} \times_{\{\pm 1\}} \Z$.
    If there exists a $\Pin^-(2)$-basic class $\mathfrak{a} \in H^2(X; \ell) / \mathrm{Tor}$, then the following hold for every Riemannian metric $g$ on $X$:
    \begin{itemize}
      \item The scalar curvature $s_g$ of $g$ satisfies
        \begin{equation} \label{eq:scalar}
          \int_X s_g^2 \,\dv_g \ge 32\pi^2 (\mathfrak{a}^{+_g})^2.
        \end{equation}
      If $\mathfrak{a}^{+_g} \ne 0$, equality holds if and only if there exists an integrable complex structure on the double cover $\widetilde{X}$ compatible with the pulled-back metric $\widetilde{g} := \pi^*g$ such that the covering transformation $\iota \colon \widetilde{X} \to \widetilde{X}$ is anti-holomorphic and the compatible K\"ahler form $\widetilde{\omega}$ satisfies $\iota^* \widetilde{\omega} = -\widetilde{\omega}$.
      \item The scalar curvature $s_g$ and the self-dual Weyl curvature $W^+_g$ of $g$ satisfy
        \begin{equation} \label{eq:weyl}
          \int_X \big(s_g - \sqrt{6}|W^+_g| \big)^2 \,\dv_g \geq 72\pi^2(\mathfrak{a}^{+_g})^2.
        \end{equation}
      If $\mathfrak{a}^{+_g} \ne 0$, equality holds if and only if the pulled-back metric $\tilde{g} := \pi^*g$ on $\widetilde{X}$ is an almost-K\"ahler metric with almost-K\"{a}hler form $\tilde{\omega}$ such that $\iota^*\tilde{\omega} = -\tilde{\omega}$.
    \end{itemize}
  \end{proposition}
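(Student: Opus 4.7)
The plan is to transplant LeBrun's two Seiberg-Witten curvature estimates to the twisted setting. Fix a metric $g$. Since $\mathfrak{a}$ is a $\Pin^-(2)$-basic class and $b_+^{\ell}(X) \ge 2$, the usual transversality argument produces a solution $(A, \Phi)$ of the $\Pin^-(2)$-monopole equations for some $\Spin^{c-}$-structure $\mathfrak{s}$ with $\widetilde{c}_1(\mathfrak{s}) \equiv \mathfrak{a}$ modulo torsion. The pointwise Weitzenb\"ock identity for $D_A$ on $S^+$ has exactly the same form as in the ordinary Seiberg-Witten case,
\[
  D_A^* D_A \Phi = \nabla_A^* \nabla_A \Phi + \tfrac{s_g}{4} \Phi + \tfrac{1}{2} \rho(F_A^+) \Phi,
\]
because it is a purely local statement about the Dirac operator on spinors. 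Substituting $D_A\Phi = 0$ and $\tfrac{1}{2}F_A^+ = q(\Phi)$, pairing with $\Phi$, and integrating give the familiar inequality $\int_X s_g |\Phi|^2 \,\dv_g + \tfrac{1}{4}\int_X |\Phi|^4 \,\dv_g \le 0$. Cauchy-Schwarz together with the algebraic identity $|F_A^+|^2 = \tfrac{1}{8}|\Phi|^4$ yields $\int_X s_g^2 \,\dv_g \ge 2\int_X |F_A^+|^2\,\dv_g$. For the topological lower bound, note that $F_A$ is a closed $\ell \otimes \sqrt{-1}\R$-valued $2$-form representing $-2\pi\sqrt{-1}\,\widetilde{c}_1(\mathfrak{s})$; Hodge theory for the local system $\ell$ works verbatim as in the untwisted case because $\ell$ carries a flat Euclidean metric, so $\int_X |F_A^+|^2 \,\dv_g \ge 16\pi^2(\mathfrak{a}^{+_g})^2$. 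Combining these steps gives \eqref{eq:scalar}.

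For \eqref{eq:weyl} I would follow LeBrun's refinement: pointwise, $q(\Phi)$ is a rank-one self-dual endomorphism, so the algebraic estimate $\langle W^+(q(\Phi)), q(\Phi)\rangle \le \tfrac{\sqrt{6}}{3}|W^+| |q(\Phi)|^2$ holds. Inserting this into the refined Weitzenb\"ock computation for $|\Phi|^2$ and re-running the Cauchy-Schwarz argument upgrades the integrand from $s_g^2$ to $(s_g - \sqrt{6}|W^+_g|)^2$, the numerical factors combining to produce the constant $72\pi^2$.

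The equality cases are the main obstacle. The plan is to pull back to the orientation double cover $\pi \colon \widetilde{X} \to X$: the pull-back $\pi^*\ell$ is trivial, so $\pi^*\mathfrak{s}$ descends to an ordinary $\Spin^c$-structure on $\widetilde{X}$ and $(\pi^*A, \pi^*\Phi)$ solves the classical Seiberg-Witten equations for $\widetilde{g}$. Equality in \eqref{eq:scalar} or \eqref{eq:weyl} on $X$ forces equality in LeBrun's original inequalities on $\widetilde{X}$; his classification then makes $\widetilde{g}$ K\"ahler, respectively almost-K\"ahler, with distinguished $2$-form $\widetilde{\omega}$ determined up to normalisation by $\pi^*\Phi$. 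The decisive remaining point is to show that the covering transformation $\iota$ intertwines the pulled-back solution with its charge-conjugate — this is exactly what the extra element $j \in \Pin^-(2)$ encodes in the twisted equations — and therefore satisfies $\iota^*\widetilde{\omega} = -\widetilde{\omega}$; such an orientation-preserving isometry of a K\"ahler manifold is automatically anti-holomorphic. Tracking the $\Pin^-(2)$-equivariance through the identification of $q(\pi^*\Phi)$ with $\widetilde{\omega}$ so as to pin down the sign $\iota^*\widetilde{\omega} = -\widetilde{\omega}$, rather than any other possibility compatible with an isometric involution, is the delicate point.
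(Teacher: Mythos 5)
Your proposal follows essentially the same route as the paper's proof: adapt LeBrun's Weitzenb\"ock arguments (for $\Spin^{c-}$-spinors and $\ell$-coefficient self-dual forms) to obtain (\ref{eq:scalar}) and (\ref{eq:weyl}), then lift a solution to $\widetilde{X}$, where the pulled-back $\Spin^{c-}$-structure reduces to an ordinary $\Spin^c$-structure and LeBrun's characterisation of the equality cases applies. The one point you flag as ``delicate'' and leave unresolved --- pinning down the sign in $\iota^*\widetilde{\omega} = -\widetilde{\omega}$ --- in fact has a one-line resolution that you are circling but do not state: the distinguished form is $\widetilde{\omega} = \sqrt{2}\, i\, q(\widetilde{\Phi})/|q(\widetilde{\Phi})|$ with $q(\widetilde{\Phi}) = \pi^* q(\Phi)$, and $i\,q(\Phi)$ is a section of $\Omega^2(X;\ell\otimes\R)$, i.e.\ an $\ell$-twisted form on the base; since $\ell = \widetilde{X}\times_{\{\pm 1\}}\Z$, the pullback to $\widetilde{X}$ of any $\ell$-twisted section is automatically anti-invariant under the deck transformation, so $\iota^*\widetilde{\omega} = -\widetilde{\omega}$ requires no further equivariance bookkeeping. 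Anti-holomorphy of $\iota$ in the K\"ahler case then follows, as you indicate, from $\iota^*\widetilde{g}=\widetilde{g}$ combined with $\iota^*\widetilde{\omega}=-\widetilde{\omega}$. Two minor slips: $\widetilde{X}$ is the given double cover associated with $\ell$, not the orientation double cover ($X$ is already oriented); and some of your intermediate constants (e.g.\ $|F_A^+|^2 = \tfrac18|\Phi|^4$ and the coefficient $\tfrac14$ on $\int_X|\Phi|^4\,\dv_g$) are inconsistent with the normalisation $\tfrac12 F_A^+ = q(\Phi)$, although the final constants $32\pi^2$ and $72\pi^2$ are correct.
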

  \begin{proof}
    LeBrun's arguments \cites{MR1872548, MR2039991} or the perturbations introduced in \cite{FM} are easily adapted to prove (\ref{eq:scalar}) and (\ref{eq:weyl}) by using the Weitzenb\"ock formulae of the Dirac operator for $\Spin^{c-}$-spinors and the Hodge Laplacian for $\ell$-coefficient self-dual forms.
    
    Assume that equality holds.
    We lift a solution $(A, \Phi)$ of the $\Pin^-(2)$-monopole equations on $X$ to the double cover $\widetilde{X}$.
    The lifted $\Spin^{c-}$-structure on $\widetilde{X}$ canonically reduces to a $\Spin^c$-structure, and the lifted solution $(\widetilde{A}, \widetilde{\Phi})$ can be identified with a solution of the ordinary Seiberg-Witten equations on $\widetilde{X}$ that satisfies
    \[
      \int_{\widetilde{X}} s_{\widetilde{g}}^2 \,\dv_{\widetilde{g}} = 32\pi^2 ((\pi^* \mathfrak{a})^{+_{\widetilde{g}}})^2, \text{ or }
      \int_{\widetilde{X}} \big( s_{\widetilde{g}} - \sqrt{6}|W^+_{\widetilde{g}}| \big)^2 \,\dv_{\widetilde{g}} = 72\pi^2 ((\pi^* \mathfrak{a})^{+_{\widetilde{g}}})^2.
    \]
    If the former (resp. latter) equality holds, the $\widetilde{g}$-self-dual form $\widetilde{\omega} = \sqrt{2} i q(\widetilde{\Phi})/|q(\widetilde{\Phi})|$ is a K\"ahler (resp. almost-K\"ahler) form compatible with $\widetilde{g}$.
    See \cite{MR2494171}*{Proposition 3.2 and Proposition 3.8}.
    Since $q(\widetilde{\Phi}) = \pi^* q(\Phi)$ and $iq(\Phi) \in \Omega^2(X; \ell \otimes \R)$, we have $\iota^* \widetilde{\omega} = - \widetilde{\omega}$.
    In the former case, moreover, $\iota$ is anti-holomorphic because $\iota^* \widetilde{g} = \widetilde{g}$.
  \end{proof}
  
\section{Gluing formulae and $\Pin^-(2)$-basic classes}
Based on gluing formulae for the $\Pin^-(2)$-monopole invariant \cite{N2}, we will establish the existence of $\Pin^-(2)$-basic classes on some classes of closed $4$-manifolds.

\subsection{Irreducible $\U(1)$ and reducible $\Pin^-(2)$.}
  We first establish a non-vanishing result based on a gluing formula for irreducible $\U(1)$-monopoles and reducible $\Pin^-(2)$-monopoles \cite{N2}*{Theorem 3.8}.
  It will play a pivotal role in the proof of Theorem \ref{thm:yamabe}.
  
  \begin{proposition} \label{prop:pin2-class a}
    Let $M$ be a closed, oriented, connected $4$-manifold that satisfies the following:
    \begin{itemize}
      \item $b_+(M)\geq 2$, and
      \item there exists a $\Spin^c$-structure $\mathfrak{s}_M$ such that $c_1(\mathfrak{s}_M)^2=2\chi(M)+3\tau(M)$ and its ordinary Seiberg-Witten invariant is odd. 
    \end{itemize}
    Let $N$ be a closed, oriented, connected $4$-manifold with $b_+(N) = 0$.
    Let $Z$ be a connected sum of arbitrary positive number of $4$-manifolds, each of which belongs to one of the following types:
    \begin{enumerate}
      \item $S^2\times\Sigma$, where $\Sigma$ is a compact Riemann surface with positive genus,
      \item $S^1\times Y$, where $Y$ is a closed oriented $3$-manifold.
    \end{enumerate}
    Set $X := M \# N \# Z$.
    Then, there exists a non-trivial double cover $\widetilde{X} \to X$ and a $\Pin^-(2)$-basic class $\mathfrak{a} \in H^2(X; \ell_X)$, where $\ell_X := \widetilde{X} \times_{\{\pm 1\}} \Z$, such that
    \[
      (\mathfrak{a}^{+_g})^2 \ge 2\chi(M) + 3\tau(M)
    \]
    for any Riemannian metric $g$ on $X$.
  \end{proposition}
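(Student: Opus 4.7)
The plan is to construct a specific non-trivial double cover of $X$ and a $\Spin^{c-}$-structure whose characteristic class encodes $c_1(\mathfrak{s}_M)$, invoke the gluing formula \cite{N2}*{Theorem 3.8} to derive non-vanishing of the $\Pin^-(2)$-monopole invariant, and read off the self-intersection bound from the Mayer--Vietoris decomposition of the twisted cohomology.

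First I would take the cover $\widetilde{X} \to X$ to be the connected sum of the trivial double covers of $M$ and $N$ with the non-trivial cover $\widetilde{Z} \to Z$ from Example \ref{ex: Z}. Mayer--Vietoris for the local system $\ell_X$ gives an orthogonal decomposition $H^2(X; \ell_X)/\mathrm{Tor} = H^2(M;\Z)/\mathrm{Tor} \oplus H^2(N;\Z)/\mathrm{Tor}$, the $Z$-summand being torsion by Example \ref{ex: Z}, together with $b_+^{\ell_X}(X) = b_+(M) \ge 2$ so that $\Pin^-(2)$-monopole invariants are defined. I would then pick a $\Spin^{c-}$-structure $\mathfrak{s}$ restricting to $\mathfrak{s}_M$ on $M$, to a $\Spin^c$-structure $\mathfrak{s}_N$ on $N$, and to some $\Spin^{c-}$-structure on $Z$ (which necessarily has torsion $\widetilde{c}_1$ by Example \ref{ex: Z}).

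Second, I would apply \cite{N2}*{Theorem 3.8}, which factors the $\Pin^-(2)$-monopole invariant of a connected sum into an ordinary Seiberg--Witten factor from the irreducible-$\U(1)$ side and a $\Z_2$-valued count of reducible $\Pin^-(2)$-monopoles from the other side. The ordinary Seiberg--Witten invariant of the $\Spin^c$-structure extending $\mathfrak{s}_M$ across $M \# N$ is odd: oddness is provided on $M$ by the hypothesis on $\mathfrak{s}_M$ and is preserved when gluing with a $b_+ = 0$ summand, using the blow-up formula together with Donaldson's diagonalisation theorem to reduce the general case to iterated connected sums with $\overline{\CP^2}$. The reducible-$\Pin^-(2)$ count on $Z$ is non-trivial because $b_+^{\ell_Z}(Z) = 0$ forces the moduli to consist entirely of reducibles whose signed count is computed in \cite{N2}. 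Multiplying the two factors yields an odd $\Pin^-(2)$-monopole invariant, so $\mathfrak{a} := \widetilde{c}_1(\mathfrak{s})$ modulo torsion is a $\Pin^-(2)$-basic class.

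Third, the bound would follow from combining the identity $\mathfrak{a}^2 = c_1(\mathfrak{s}_M)^2 + c_1(\mathfrak{s}_N)^2 = 2\chi(M) + 3\tau(M) + c_1(\mathfrak{s}_N)^2$ with the universal inequality $(\mathfrak{a}^{+_g})^2 \ge \mathfrak{a}^2$: when $N$ is spin one chooses $c_1(\mathfrak{s}_N) = 0$ and concludes immediately, while for general $N$ one promotes the argument to several $\Pin^-(2)$-basic classes arising from varying $\mathfrak{s}_N$ and applies the refined LeBrun-type estimate, mirroring LeBrun's refinement of the Seiberg--Witten bound on blow-ups, to recover the sharp lower bound $2\chi(M) + 3\tau(M)$.

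The step I expect to be hardest is this last refinement, since each non-spin factor of $N$ forces $c_1(\mathfrak{s}_N)^2 < 0$ and thus strictly decreases $\mathfrak{a}^2$. The delicate point is to verify that the $\Pin^-(2)$-gluing formula supports the family of basic classes needed for a convex-combination estimate on self-dual harmonic projections, and that the resulting lower bound holds uniformly in the Riemannian metric on $X$.
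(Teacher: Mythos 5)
Your overall architecture matches the paper's: the same double cover (trivial over $M\#N$, the cover of Example \ref{ex: Z} over $Z$), the same appeal to \cite{N2}*{Theorem 3.8} to glue an irreducible $\U(1)$-monopole on $M\#N$ to reducible $\Pin^-(2)$-monopoles on $Z$, and the correct recognition that the naive bound is off by $c_1(\mathfrak{s}_N)^2<0$ and must be repaired by a metric-adapted choice among a family of basic classes. However, there is a genuine gap in how you produce the odd Seiberg--Witten invariant on $M\#N$. Donaldson's diagonalisation theorem only diagonalises the intersection form of a negative-definite $4$-manifold; it does \emph{not} give a diffeomorphism (or any decomposition) of $N$ as an iterated connected sum with $\overline{\CP^2}$, so "reducing to blow-ups" is not available. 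The paper instead uses Fr{\o}yshov's \emph{generalised} blow-up formula \cite{MR2465077}*{Corollary 14.1.1}, which applies to an arbitrary negative-definite summand with $b_1=0$ and produces $\Spin^c$-structures with $c_1=c_1(\mathfrak{s}_M)+(\pm e_1+\dots+\pm e_k)$ for all independent sign choices. Moreover you do not address $b_1(N)>0$ at all: in that case one must first surger out loops to obtain $N'$ with $b_1(N')=0$, apply Fr{\o}yshov there, and then transport the invariant back to $M\#N$ via Ozsv\'ath--Szab\'o's surgery formula \cite{MR1863729}*{Proposition 2.2}, at the cost of evaluating the invariant against the $\mu$-classes of the surgered loops. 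Without these two ingredients the "oddness is preserved" claim is unsupported for general $N$ with $b_+(N)=0$.

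The second gap is the final estimate, which you flag as the hardest step but leave unfinished. The resolution is exactly the family-of-classes argument you gesture at: because the signs $\pm e_i$ in Fr{\o}yshov's formula are arbitrary and independent, \emph{all} $2^k$ classes $\mathfrak{a}=c_1(\mathfrak{s}_M)+(\pm e_1+\dots+\pm e_k)+\widetilde{c}_1(\mathfrak{s}_2)$ are $\Pin^-(2)$-basic, and for any given metric $g$ one chooses the sign of each $e_i$ according to the sign of $\langle e_i^{+_g}, c_1(\mathfrak{s}_M)^{+_g}\rangle$ so that $(\mathfrak{a}^{+_g})^2\ge (c_1(\mathfrak{s}_M)+\widetilde{c}_1(\mathfrak{s}_2))^2=2\chi(M)+3\tau(M)$; this is precisely \cite{MR2032500}*{Corollary 11}. (Note also that $(\mathfrak{a}^{+_g})^2\ge\mathfrak{a}^2$ alone can never recover the sharp bound, since $c_1(\mathfrak{s}_N)$ is characteristic and hence has strictly negative square whenever the form of $N$ is odd and non-trivial.) With these two repairs your outline becomes the paper's proof.
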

  
  \begin{proof}
    Set $X_1 := M \# N$ and $X_2 := Z$.
    We will apply \cite{N2}*{Theorem 3.8} to $X = X_1 \# X_2$ as follows.
    
    We can choose a set of non-trivial smooth loops $\gamma_1, \dots, \gamma_b$ in $N$ so that surgery along them produces a $4$-manifold $N'$ with $b_1(N')=0$ and $b_+(N')=0$.
    Conversely, we can find a set of homologically trivial embedded $2$-spheres in $N'$ so that surgery along them recovers $N$.
    We will identify $H^2(N;\Z)$ with $H^2(N';\Z)$.
    
    Set $X_1' := M \# N'$.
    Let $e_1, \dots, e_k$ be a set of generators for $H^2(N';\Z) / \mathrm{Tor}$ relative to which the intersection form is diagonal \cite{MR710056}.
    By Froyshov's generalised blow-up formula \cite{MR2465077}*{Corollary 14.1.1}, $X_1'$ has a $\Spin^c$-structure $\mathfrak{s}_1'$ such that
    \[
      c_1(\mathfrak{s}_1') = c_1(\mathfrak{s}_M) + (\pm e_1 + \dots + \pm  e_k), 
    \]
    its ordinary Seiberg-Witten moduli space is $0$-dimensional, and its ordinary Seiberg-Witten invariant is equal to that of $(M,\mathfrak{s}_M)$.
    Here, the signs of $\pm e_i$ are arbitrary and independent of one another.
    
    By Ozsv\'ath and Szab\'o's surgery formula \cite{MR1863729}*{Proposition 2.2}, $X_1$ has a $\Spin^c$-structure $\mathfrak{s}_1$ such that $c_1(\mathfrak{s}_1) = c_1(\mathfrak{s}_1')$ and
    \[
      \SW^{\U(1)}(X_1, \mathfrak{s}_1) (\mu(\gamma_1) \cdots \mu(\gamma_b)) = \SW^{\U(1)}(X_1', \mathfrak{s}_1')(1)
    \]
    for some homology orientation on $X_1'$, where $ \SW^{\U(1)}$ denotes the ordinary Seiberg-Witten invariant and $\mu \colon H_1(X_1; \Z) \to H^1(\mathcal{B}^*; \Z)$ is a ``$\mu$-map'' to the irreducible configuration space $\mathcal{B}^* = \mathcal{B}^*(\mathfrak{s}_1)$.
    
    We take a non-trivial double cover $\widetilde{X_2} \to X_2$ as described in Example \ref{ex: Z}, and choose any $\Spin^{c-}$-structure $\mathfrak{s}_2$ on $\widetilde{X_2} \to X_2$.
    Note that $\widetilde{c}_1(\mathfrak{s}_2)^2=0$.
  
    Set $\widetilde{X} := X_1 \# X_1 \# \widetilde{X_2}$.
    It now follows from \cite{N2}*{Theorem 3.8} that
    \[
      \mathfrak{a} := c_1(\mathfrak{s}_M) + (\pm e_1 + \dots + \pm e_k) + \widetilde{c}_1(\mathfrak{s}_2)
    \]
    is a $\Pin^-(2)$-basic class.
    Given a Riemannian metric $g$ on $X$, we can choose the signs of $\pm e_i$ so that
    \[
      (\mathfrak{a}^{+_g})^2 \ge (c_1(\mathfrak{s}_M) + \widetilde{c}_1(\mathfrak{s}_2))^2 = 2\chi(M) + 3\tau(M)
    \]
    holds \cite{MR2032500}*{Corollary 11}.
    This completes the proof.
  \end{proof}
  
\subsection{Surgery formulae for the $\Pin^-(2)$-monopole invariant}
  We digress to generalise Ozsv\'ath and Szab\'o's surgery formula to the $\Pin^-(2)$-monopole invariant.
  
  We first describe a surgery formula for the $\Z_2$-valued $\Pin^-(2)$-monopole invariant,  which will be used to prove Proposition \ref{prop:pin2-class b}.
  Let $X$ be a closed, oriented, connected $4$-manifold and $\pi \colon \widetilde{X} \to X$ a non-trivial double cover.
  Fix a $\Spin^{c-}$-structure $\mathfrak{s}$ on $\widetilde{X} \to X$.
  Let $S \subset X$ be an embedded $2$-sphere with zero self-intersection number.
  Note that the restriction of $\mathfrak{s}$ to a tubular neighbourhood of $S$ is untwisted; therefore, it canonically induces a usual $\Spin^c$-structure on the neighbourhood.
  We denote by $X'$ the manifold obtained by surgery on $S$, and let $C \subset X'$ be the core of the added $S^1 \times D^3$.
  The inverse image $\pi^{-1}(S) \subset \widetilde{X}$ consists of disjoint embedded $2$-spheres $S_1$ and $S_2$.
  Equivariant surgery on $S_1$ and $S_2$ produces a double covering $\widetilde{X'} \to X'$.
  Let $\{C_1, C_2\} := \pi^{-1}(C) \subset \widetilde{X'}$.
  \[
  \begin{tikzcd}
    \widetilde{X} \setminus \{S_1 \cup S_2\} \arrow{d} \arrow{r}{\cong} & \widetilde{X'} \setminus \{C_1 \cup C_2\} \arrow{d} \\
    X \setminus S \arrow{r}{\cong} & X' \setminus C.
  \end{tikzcd}
  \]
  There is a unique $\Spin^{c-}$-structure $\mathfrak{s'}$ on $\widetilde{X'} \to X'$ with the property that
  \[
    \left. \mathfrak{s'} \right|_{\widetilde{X'} \setminus \{C_1 \cup C_2\} \to X \setminus S} = \left. \mathfrak{s} \right|_{\widetilde{X} \setminus \{S_1 \cup S_2\} \to X' \setminus C}.
  \]
  Note that the restriction of $\mathfrak{s'}$ to a tubular neighbourhood of $C$ is untwisted; therefore, it canonically induces a usual $\Spin^c$-structure on the neighbourhood.
  We define a ``$\mu$-map'' associated with $\mathfrak{s'}$ by
  \[
    \mu_{\mathcal{E}} \colon H_1(X'; \Z_2) \to H^1(\mathcal{B}^*; \Z_2), \quad \alpha \mapsto w_2(\mathcal{E}) / \alpha,
  \]
  where $\mathcal{E}$ is the universal characteristic $\mathrm{O}(2)$-bundle on $X' \times \mathcal{B}^*$.
  \begin{proposition} \label{prop: pin2 os formula}
    \[ 
      \SW^{\Pin^-(2)}(X', \mathfrak{s'})(\xi \cdot \mu_{\mathcal{E}}(C)) = \SW^{\Pin^-(2)}(X,\mathfrak{s})(\xi)
    \]
   for any $\xi \in H^*(\mathcal{B}^*; \Z_2)$.
  \end{proposition}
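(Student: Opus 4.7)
The plan is to adapt Ozsv\'ath and Szab\'o's proof \cite{MR1863729}*{Proposition 2.2} to the $\Pin^-(2)$-setting, exploiting the crucial fact that $\mathfrak{s}$ restricts to an untwisted $\Spin^c$-structure on a tubular neighbourhood of $S$, and similarly $\mathfrak{s'}$ on a neighbourhood of $C$. This reduces all local analysis near the surgery region to the ordinary $\U(1)$-Seiberg--Witten framework; the genuinely twisted gauge group $\mathcal{G} = \Gamma(\widetilde{X} \times_{\{\pm 1\}} \U(1))$ only sees the common cobordism piece $W_0 := X \setminus \nu(S) = X' \setminus \nu(C)$, on which the $\Pin^-(2)$- and $\U(1)$-monopole equations differ only in the coefficient system for the curvature term.

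First, I would equip $X$ and $X'$ with metrics that agree on $W_0$ and develop a long cylindrical neck $S^2 \times S^1 \times [-T,T]$ attaching $W_0$ to the respective surgery piece---$S^2 \times D^2$ on the side of $X$, and $S^1 \times D^3$ on the side of $X'$. As $T \to \infty$, the solution spaces on both sides are governed by the same boundary-value problem on $W_0$ with cylindrical end $S^2 \times S^1 \times [0,\infty)$, since the finite-energy $\Pin^-(2)$-monopoles on the two surgery pieces are all reducible (both have $b_+^\ell = 0$ on the cap, and $\widetilde{c}_1$ restricts trivially). Standard neck-stretching and gluing then identify, for large $T$, the irreducible $\Pin^-(2)$-moduli space on $X$ (resp.\ $X'$) with the fibre product of the boundary-value moduli space on $W_0$ with the reducible moduli on the cap.

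The reducible locus on $S^2 \times D^2$ is a point, while the reducible locus on $S^1 \times D^3$ is the circle of flat $\U(1)$-connections on $C$; this contributes an extra $S^1$-factor to the $X'$-moduli space compared to the $X$-moduli space. By construction of $\mu_{\mathcal{E}}$---a slant product with $w_2(\mathcal{E})$---the class $\mu_{\mathcal{E}}(C)$ is Poincar\'e dual in $\mathcal{B}^*$ to the locus of configurations whose restriction to $C$ has trivial $\Z_2$-holonomy. Cutting the $X'$-moduli space against $\mu_{\mathcal{E}}(C)$ therefore slices out this $S^1$-factor at a single point, and the resulting intersection with any test cohomology class $\xi$ pulled back from $W_0$ is identified with the corresponding pairing on the $X$-moduli space. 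This is exactly the asserted equality of $\Z_2$-invariants.

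The main obstacle will be making the neck-stretching/gluing analysis rigorous in the $\Pin^-(2)$-setting. Two points require care beyond the ordinary case. First, one must verify that the global $\mathcal{G}$-action on the cylindrical-end problem on $W_0$ is compatible with the local $\U(1)$-gauge actions on the caps; this holds because the double cover is canonically trivialised over both $\nu(S) \subset X$ and $\nu(C) \subset X'$ and the identification of complements intertwines the covering data, so no monodromy obstruction arises. Second, since $\Pin^-(2)$-moduli spaces need not be orientable, the gluing must be done modulo $2$, which is precisely why the formula is stated for $\Z_2$-valued invariants; regularity and transversality can be achieved by the same generic perturbations of the metric and of the equations used in \cite{N2}, now chosen supported on $W_0$ so as to respect the cap structure.
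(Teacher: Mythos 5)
Your proposal follows essentially the same route as the paper: fix a cylindrical-end metric modelled on the standard product metric on $[0,\infty)\times S^1\times S^2$, use the fact that $\mathfrak{s}$ and $\mathfrak{s'}$ restrict to untwisted $\Spin^c$-structures near $S$ and $C$ so that the cap moduli spaces are identified with reducible ordinary Seiberg--Witten moduli, and then run the gluing argument parallel to Ozsv\'ath--Szab\'o's Proposition 2.2, with $\mu_{\mathcal{E}}(C)$ cutting down the extra circle of reducibles on $S^1\times D^3$. The one place the paper is more careful is that reducibility of solutions on the caps is obtained by extending the neck metric over $S^1\times D^3$ and $D^2\times S^2$ with non-negative scalar curvature, rather than from $b_+^{\ell}=0$ and the triviality of $\widetilde{c}_1$ alone.
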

  \begin{proof}
    Fix a cylindrical-end metric on $X \setminus S$ modeld on the standard product metric on $[0,\infty) \times S^1 \times S^2$.
    This metric on $X \setminus S$ can be extended over both $S^1 \times D^3$ and $D^2 \times S^2$ to give metrics with non-negative scalar curvature.
    As noted above, the $\Spin^{c-}$-strucure $\mathfrak{s}$ induces a usual $\Spin^c$-structure on a neighbourhood of $S$, and so does $\mathfrak{s'}$ on a neighbourhood of $C$.
    Thus, the moduli spaces of solution of the $\Pin^-(2)$-monopole equations over $S^1 \times S^2$, $S^1 \times D^3$, and $D^2 \times S^2$ can be identified with the moduli spaces of reducible solutions of the ordinary Seiberg-Witten equations.
    We also observe that each solution of the $\Pin^-(2)$-monopole equations on $X$ and $X'$ restricts to a solution of the ordinary Seiberg-Witten equations near $S$ and $C$ respectively.
    The rest of the proof runs parallel to that of \cite{MR1863729}*{Proposition 2.2}.
   \end{proof}

  We next describe a surgery formula for the $\Z$-valued $\Pin^-(2)$-monopole invariant, which will be used to prove Theorem \ref{thm:Enriques}.
  Assume that the moduli space on $(X,\mathfrak{s})$ is orientable.
  As noted above, the restriction of $\mathfrak{s}$ and that of $\mathfrak{s'}$ canonically induce $\Spin^c$-structures on tubular neighbourhoods of $S$ and $C$ respectively.
  For a $\Spin^c$-structure, the determinant line bundle of its Dirac operators is always trivial.
  Then, by the excision property for the indices of families \citelist{\cite{MR1079726}*{7.1.3} \cite{N2}*{Lemma 6.10}}, we can show that the moduli space on $(X', \mathfrak{s'})$ is also orientable.
  Consequently, if the $\Z$-valued invariant $\SW^{\Pin^-(2)}_{\Z}(X,\mathfrak{s})$ is defined, so does $\SW^{\Pin^-(2)}_{\Z}(X', \mathfrak{s'})$.
  We define another ``$\mu$-map'' associated with $\mathfrak{s'}$ by
  \[
    \hat{\mu}_{\mathcal{E}} \colon H_1(X'; \ell') \to H^1(\mathcal{B}^*; \Z), \quad \alpha \mapsto \widetilde{c}_1(\mathcal{E}) / \alpha,
  \]
  where $\ell' := \widetilde{X'} \times_{\pm 1} \Z$.
  The proof of the following surgery formula also runs parallel to that of \cite{MR1863729}*{Proposition 2.2}.  
  \begin{proposition} \label{prop: Z surgery formula}
    Assume that the moduli space on $(X,\mathfrak{s})$ is orientable.
    We have, for any $\xi \in H^*(\mathcal{B}^*; \Z)$,
    \[
      \SW^{\Pin^-(2)}_{\Z}(X', \mathfrak{s'})(\xi \cdot \hat{\mu}_{\mathcal{E}}(C)) = \SW^{\Pin^-(2)}_{\Z}(X,\mathfrak{s})(\xi)
    \]
    for some orientations on the moduli spaces.
  \end{proposition}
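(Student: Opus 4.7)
The plan is to copy the neck-stretching argument already used for the $\Z_2$-valued version in Proposition \ref{prop: pin2 os formula}, and superimpose on it a careful bookkeeping of orientations. Concretely, I would fix a long cylindrical neck metric on $X\setminus S \cong X'\setminus C$ modelled on $[0,T]\times S^1\times S^2$ and cap it off on one side by the positive scalar curvature metric on $D^2\times S^2$ (recovering $X$) and on the other side by the positive scalar curvature metric on $S^1\times D^3$ (recovering $X'$). Since the restriction of $\mathfrak{s}$ near $S$ and of $\mathfrak{s}'$ near $C$ are both genuine $\Spin^c$-structures, the $\Pin^-(2)$-monopole equations on the caps and on the neck reduce to the ordinary Seiberg-Witten equations, and on $S^1\times S^2$, $D^2\times S^2$ all solutions are reducible; on $S^1\times D^3$ the reducible solutions form a circle parametrised by the holonomy around the $S^1$-factor.

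Second, I would identify the geometric meaning of $\hat{\mu}_{\mathcal{E}}(C)$. After sufficient stretching, a solution on $(X',\mathfrak{s}')$ is obtained from a solution on $(X,\mathfrak{s})$ (which is essentially supported away from the neck, because the cap $D^2\times S^2$ admits only reducibles) by choosing how to glue its asymptotic limit to a reducible on $S^1\times D^3$. That gluing parameter traces out a circle, and the Poincar\'e dual of $[C]\in H_1(X';\ell')$ cuts this circle down to a point; tautologically this cut-down is exactly evaluation against $\widetilde{c}_1(\mathcal{E})/[C] = \hat{\mu}_{\mathcal{E}}(C)$. So as unoriented cut-down moduli spaces, $\mathcal{M}(X',\mathfrak{s}')\cap \hat{\mu}_{\mathcal{E}}(C) \cap \xi$ and $\mathcal{M}(X,\mathfrak{s})\cap \xi$ are diffeomorphic for large neck length; the arguments of Ozsv\'ath-Szab\'o \cite{MR1863729}*{Proposition 2.2} can be transcribed verbatim.

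The main obstacle is the orientation transport, and this is where the hypothesis enters. Orientability of $\mathcal{M}(X',\mathfrak{s}')$ should follow from that of $\mathcal{M}(X,\mathfrak{s})$ via the excision principle for indices of families \cite{MR1079726}*{7.1.3}, \cite{N2}*{Lemma 6.10}: the two index bundles differ, up to stabilisation, by the family index of the Dirac operator associated with the $\Spin^c$-structure on the cap region, whose determinant line bundle is canonically trivial. Fixing such a trivialisation at the same time on both caps then produces compatible orientations of the two moduli spaces, with respect to which the gluing diffeomorphism above is orientation-preserving (up to a universal sign that can be absorbed into the choice).

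With that excision-plus-trivialisation step in hand, the signed count equality drops out of the gluing diffeomorphism, proving the formula. The delicate point I expect to spend the most care on is checking that the trivialisation of the $\Spin^c$-Dirac determinant line over the two different caps $D^2\times S^2$ and $S^1\times D^3$ can be chosen coherently with the $\Pin^-(2)$-orientation on the neck region, so that the ``for some orientations'' clause in the statement is a genuine global choice rather than one depending on the cut-down class $\xi$.
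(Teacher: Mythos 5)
Your proposal is correct and follows essentially the same route as the paper: the paper likewise reduces the argument to the neck-stretching proof of Ozsv\'ath--Szab\'o (as already used for Proposition \ref{prop: pin2 os formula}) and handles orientations exactly as you do, by transporting orientability from $(X,\mathfrak{s})$ to $(X',\mathfrak{s'})$ via the excision property for indices of families together with the canonical triviality of the determinant line of the $\Spin^c$ Dirac operator on the cap regions. No substantive difference.
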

  
\subsection{Irreducible $\Pin^-(2)$ and reducible $\Pin^-(2)$}
  We can establish another non-vanishing result based on a generalised blow-up formula for the $\Pin^-(2)$-monopole invariant \cite{N2}*{Theorem 3.9} and a gluing formula for irreducible $\Pin^-(2)$-monopoles and reducible $\Pin^-(2)$-monopoles \cite{N2}*{Theorem 3.11}
  It will play a key role in the proof of Theorem \ref{thm:Enriques0}.
    
  \begin{proposition} \label{prop:pin2-class b}
    Let $M$ be a closed, oriented, connected $4$-manifold that satisfies the following:
    \begin{itemize}
      \item there exists a non-trivial double cover $\widetilde{M} \to M$ with $b_+^{\ell_M}(M) \geq 2$, where $\ell_M=\widetilde{M} \times_{\{\pm 1\}} \Z$, and
      \item there exists a $\Spin^{c-}$-structure $\mathfrak{s}_M$ on $\widetilde{M}\to M$ such that $\widetilde{c}_1(\mathfrak{s}_M)^2 = 2\chi(M)+3\tau(M)$ and its $\Z_2$-valued $\Pin^-(2)$-monopole invariant is non-trivial.
    \end{itemize}
    Let $N$ be a closed, oriented, connected $4$-manifold with $b_+(N) = 0$.
    Let $Z$ be a connected sum of arbitrary positive number of $4$-manifolds, each of which belongs to one of the following types:
    \begin{enumerate}
      \item $S^2\times\Sigma$, where $\Sigma$ is a compact Riemann surface with positive genus,
      \item $S^1\times Y$, where $Y$ is a closed oriented $3$-manifold.
    \end{enumerate}
    Set $X := M \# N \# Z$.
    Then, there exist a non-trivial double cover $\widetilde{X} \to X$ and a $\Pin^-(2)$-basic class $\mathfrak{a} \in H^2(X; \ell_X)$, where $\ell_X := \widetilde{X} \times_{\{\pm 1\}} \Z$, such that
    \[
      (\mathfrak{a}^{+_g})^2 \ge 2\chi(M) + 3\tau(M)
    \]
    for any Riemannian metric on $X$.
  \end{proposition}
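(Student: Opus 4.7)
The plan is to follow the architecture of the proof of Proposition \ref{prop:pin2-class a}, replacing each ordinary Seiberg--Witten ingredient with its $\Pin^-(2)$ counterpart: Froyshov's generalised blow-up formula is replaced by the $\Pin^-(2)$-version \cite{N2}*{Theorem 3.9}, Ozsv\'ath--Szab\'o's surgery formula by the $\Pin^-(2)$-surgery formula Proposition \ref{prop: pin2 os formula}, and the irreducible-$\U(1)$/reducible-$\Pin^-(2)$ gluing formula \cite{N2}*{Theorem 3.8} by the irreducible-$\Pin^-(2)$/reducible-$\Pin^-(2)$ gluing formula \cite{N2}*{Theorem 3.11}.

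Concretely, set $X_1 := M\# N$ and $X_2 := Z$. First, as in Proposition \ref{prop:pin2-class a}, choose loops $\gamma_1,\dots,\gamma_b\subset N$ whose surgery produces $N'$ with $b_1(N')=b_+(N')=0$, and fix a basis $e_1,\dots,e_k$ of $H^2(N';\Z)/\mathrm{Tor}$ diagonalising its negative-definite intersection form; the double cover $\widetilde{M}\to M$ extends uniquely to a double cover $\widetilde{X_1'}\to X_1':=M\# N'$, trivial on the $N'$ summand. Second, apply \cite{N2}*{Theorem 3.9} to produce, for any choice of signs, a $\Spin^{c-}$-structure $\mathfrak{s}_1'$ on $\widetilde{X_1'}\to X_1'$ with $\widetilde{c}_1(\mathfrak{s}_1')=\widetilde{c}_1(\mathfrak{s}_M)+(\pm e_1 + \dots + \pm e_k)$, zero-dimensional moduli space, and non-trivial $\Z_2$-valued $\Pin^-(2)$-monopole invariant inherited from $(M,\mathfrak{s}_M)$. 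Third, since the $\Spin^{c-}$-structure is untwisted near each surgery sphere in the $N'$ summand, iterated application of Proposition \ref{prop: pin2 os formula} yields a $\Spin^{c-}$-structure $\mathfrak{s}_1$ on an extension $\widetilde{X_1}\to X_1$ of $\widetilde{M}\to M$ with $\widetilde{c}_1(\mathfrak{s}_1)=\widetilde{c}_1(\mathfrak{s}_1')$ and
\[
  \SW^{\Pin^-(2)}(X_1,\mathfrak{s}_1)\bigl(\mu_{\mathcal{E}}(C_1)\cdots\mu_{\mathcal{E}}(C_b)\bigr)\neq 0.
\]
Fourth, take the non-trivial double cover $\widetilde{X_2}\to X_2$ of Example \ref{ex: Z} together with any $\Spin^{c-}$-structure $\mathfrak{s}_2$ on it, so that $\widetilde{c}_1(\mathfrak{s}_2)^2=0$. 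Fifth, set $\widetilde{X}:=\widetilde{X_1}\#\widetilde{X_2}$ and apply \cite{N2}*{Theorem 3.11} to conclude that
\[
  \mathfrak{a}:=\widetilde{c}_1(\mathfrak{s}_M)+(\pm e_1 + \dots + \pm e_k)+\widetilde{c}_1(\mathfrak{s}_2)
\]
is a $\Pin^-(2)$-basic class on $X$.

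For the curvature estimate, given a Riemannian metric $g$ on $X$, the $\ell_X$-coefficient harmonic decomposition splits across the connected sum, so that the cross-terms between the self-dual projections of $\widetilde{c}_1(\mathfrak{s}_M)$ and $\widetilde{c}_1(\mathfrak{s}_2)$ vanish; the choice-of-signs argument of \cite{MR2032500}*{Corollary 11} applied to $\pm e_1 + \dots + \pm e_k$ then yields $(\mathfrak{a}^{+_g})^2\ge(\widetilde{c}_1(\mathfrak{s}_M)+\widetilde{c}_1(\mathfrak{s}_2))^2=\widetilde{c}_1(\mathfrak{s}_M)^2=2\chi(M)+3\tau(M)$. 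The main obstacle I anticipate is bookkeeping: one must match the formal dimensions on the two sides of the connected sum and verify that the $\mu_{\mathcal{E}}$-insertion produced in step three pairs correctly against the reducible $\Pin^-(2)$-monopole data on $\widetilde{X_2}$ under the gluing theorem \cite{N2}*{Theorem 3.11}, and that the surgery spheres used in step three lie in the $N$ summand where the $\Spin^{c-}$-structure canonically reduces to a $\Spin^c$-structure (so that Proposition \ref{prop: pin2 os formula} applies verbatim). Once these compatibility checks are in place, the argument is a mechanical transcription of the proof of Proposition \ref{prop:pin2-class a}.
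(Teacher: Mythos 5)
Your proposal follows essentially the same route as the paper's proof: the same decomposition $X_1 = M\#N$, $X_2 = Z$, the same surgery on loops in $N$ to reach $N'$, the $\Pin^-(2)$ blow-up formula \cite{N2}*{Theorem 3.9}, the surgery formula of Proposition \ref{prop: pin2 os formula}, the gluing formula \cite{N2}*{Theorem 3.11} with the reducible data from Example \ref{ex: Z}, and the sign-choice argument of \cite{MR2032500}*{Corollary 11} for the curvature estimate. The only cosmetic discrepancy is that the equivariant connected sum of the two non-trivial double covers is $\widetilde{X_1}\#\widetilde{X_2}\#(S^1\times S^3)$ rather than $\widetilde{X_1}\#\widetilde{X_2}$, but this does not affect the argument.
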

  
  \begin{proof}
    Set $X_1 := M \# N$ and $X_2 := Z$.
    We will first apply \cite{N2}*{Theorem 3.9} to $X_1 = M \# N$, and next \cite{N2}*{Theorem 3.11} to $X = X_1 \# X_2$ as follows.
    
    We can choose a set of non-trivial smooth loops $\gamma_1, \dots, \gamma_b$ in $N$ so that surgery along them produces a $4$-manifold $N'$ with $b_1(N')=0$ and $b_+(N')=0$.
    Conversely, we can find a set of homologically trivial embedded $2$-spheres in $N'$ so that surgery along them recovers $N$.
    We will identify $H^2(N;\Z)$ with $H^2(N';\Z)$.
    
    Set $X_1' := M \# N'$.
    Let $e_1, \dots, e_k$ be a set of generators for $H^2(N';\Z) / \mathrm{Tor}$ relative to which the intersection form is diagonal.
    By a generalised blow-up formula for the $\Pin^-(2)$-monopole invariant \cite{N2}*{Theorem 3.9}, we have a double covering $\widetilde{X_1'} \to X_1'$ and a unique $\Spin^{c-}$-structure $\mathfrak{s}_1'$ on it such that
    \[
      \widetilde{c}_1(\mathfrak{s}_1') = \widetilde{c}_1(\mathfrak{s}_M) + (\pm e_1 + \dots + \pm  e_k), 
    \]
    its $\Pin^-(2)$-monopole moduli space is $0$-dimensional, and its $\Pin^-(2)$-monopole invariant is equal to that of $(M,\mathfrak{s}_M)$.
    Here, the signs of $\pm e_i$ are arbitrary and independent of one another.
    
    By Proposition \ref{prop: pin2 os formula}, we have a double covering $\widetilde{X_1} \to X_1$ and a unique $\Spin^{c-}$-structure $\mathfrak{s}_1$ on it such that $\widetilde{c}_1(\mathfrak{s}_1) = \widetilde{c}_1(\mathfrak{s}_1')$ and
    \[
      \SW^{\Pin^-(2)}(X_1, \mathfrak{s}_1) (\mu_{\mathcal{E}}(\gamma_1) \cdots \mu_{\mathcal{E}}(\gamma_b)) = \SW^{\Pin^-(2)}(X_1', \mathfrak{s}_1')(1).
    \]
    
    We take a non-trivial double cover $\widetilde{X_2} \to X_2$ as described in Example \ref{ex: Z}, and choose any $\Spin^{c-}$-structure $\mathfrak{s}_2$ on $\widetilde{X_2} \to X_2$.
    Note that $\widetilde{c}_1(\mathfrak{s}_2)^2=0$.
  
    It now follows from \cite{N2}*{Theorem 3.11} that
    \[
      \mathfrak{a} := c_1(\mathfrak{s}_M) + (\pm e_1 + \dots + \pm e_k) + \widetilde{c}_1(\mathfrak{s}_2)
    \]
    is a $\Pin^-(2)$-basic class.
    Given a Riemannian metric $g$ on $X$, we can choose the signs of $\pm e_i$ so that
    \[
      (\mathfrak{a}^{+_g})^2 \ge (c_1(\mathfrak{s}_M) + \widetilde{c}_1(\mathfrak{s}_2))^2 = 2\chi(M) + 3\tau(M)
    \]
    holds \cite{MR2032500}*{Corollary 11}.
    This completes the proof.
  \end{proof}

\section{Computations of the Yamabe invariant} \label{section: Yamabe}
  Let us recall that we have
  \[
    \infs(X) := \inf_g \int_X |s_g|^2 \,\dv_g = 
    \begin{cases}
      (\yinv(X))^2 &\text{ if } \yinv(X) \le 0 \\
      0 &\text{ if } \yinv(X) \ge 0
     \end{cases}
  \]
  for any closed oriented $4$-manifold $X$ \cites{MR1085114, MR1674105}.
  
  \begin{proposition} \label{prop: upper bound}
    Let $M$, $N$, and $Z$ satisfy the assumptions in Proposition \ref{prop:pin2-class a} or Proposition \ref{prop:pin2-class b}.
    Set $X = M \# N \# Z$.
    Then, we have
    \[
      \infs(X) \ge 32\pi^2 \big( 2\chi(M) + 3\tau(M) \big).
    \]
  \end{proposition}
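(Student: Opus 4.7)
The plan is to chain together the two key ingredients already established in this section: the existence of a $\Pin^-(2)$-basic class with a metric-independent lower bound on its self-dual part, produced by Proposition \ref{prop:pin2-class a} or Proposition \ref{prop:pin2-class b}, and LeBrun's scalar curvature estimate from Proposition \ref{prop:scalar}.

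First, under either set of hypotheses on $M$, $N$, $Z$, Proposition \ref{prop:pin2-class a} or Proposition \ref{prop:pin2-class b} supplies a non-trivial double cover $\widetilde{X} \to X$, with associated local system $\ell_X$, and a $\Pin^-(2)$-basic class $\mathfrak{a} \in H^2(X; \ell_X)$ satisfying
\[
(\mathfrak{a}^{+_g})^2 \ge 2\chi(M) + 3\tau(M)
\]
uniformly over all Riemannian metrics $g$ on $X$. The hypothesis $b_+^{\ell_X}(X) \ge 2$ needed to make sense of a ``basic class'' is guaranteed by $b_+(M) \ge 2$ (respectively $b_+^{\ell_M}(M) \ge 2$ under the hypotheses of Proposition \ref{prop:pin2-class b}) together with $b_+(N) = 0$ and the vanishing $b_+^{\ell_Z}(Z) = 0$ recorded in Example \ref{ex: Z}; indeed, it is implicit in the fact that a basic class is produced at all.

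Second, we feed this metric-independent bound into inequality \eqref{eq:scalar} of Proposition \ref{prop:scalar} to obtain
\[
\int_X s_g^2 \,\dv_g \ge 32\pi^2 (\mathfrak{a}^{+_g})^2 \ge 32\pi^2 \bigl(2\chi(M) + 3\tau(M)\bigr)
\]
for every Riemannian metric $g$ on $X$. Taking the infimum of the left-hand side over $g \in \metrics(X)$ yields the desired lower bound on $\infs(X)$.

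No substantive obstacle is anticipated: the proposition is essentially a packaging of the non-vanishing results of Section~3 with the Weitzenb\"ock-type curvature estimate of Section~2, both already carried out. The only point one must be careful about is that the lower bound on $(\mathfrak{a}^{+_g})^2$ furnished by Propositions \ref{prop:pin2-class a} and \ref{prop:pin2-class b} is genuinely uniform in $g$, which is precisely how those statements are formulated and which is what allows the infimum over metrics to be taken at the end.
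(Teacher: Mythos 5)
Your proof is correct and is essentially identical to the paper's: both combine the $\Pin^-(2)$-basic class and the metric-independent bound $(\mathfrak{a}^{+_g})^2 \ge 2\chi(M)+3\tau(M)$ from Proposition \ref{prop:pin2-class a} or \ref{prop:pin2-class b} with LeBrun's estimate (\ref{eq:scalar}), then take the infimum over metrics. No further comment is needed.
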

  \begin{proof}
    Proposition \ref{prop:pin2-class a} or Proposition \ref{prop:pin2-class b} and LeBrun's curvature estimate (\ref{eq:scalar}) imply that
    \[
      \int_X s_g^2 \,\dv_g \ge 32 \pi^2 (\mathfrak{a}^{+_g})^2 \ge 32 \pi^2 \big( 2\chi(M) + 3\tau(M) \big)
    \]
    for any Riemannian metric $g$ on $X$.
  \end{proof}
  
  \begin{proof}[Proof of Theorem \ref{thm:yamabe} and Theorem \ref{thm:Enriques0}]
    Let $M$, $N$, and $Z$ satisfy the assumptions in Theorem \ref{thm:yamabe} or Theorem \ref{thm:Enriques0}.
    We have $\infs(M) = 32\pi^2 c_1^2(M)$ by \cites{MR1386835, MR1674105}, and $\infs(N) = \infs(Z) = 0$ by assumption.
    We remark that an Enriques surface satisfies the assumption for $M$ in Proposition \ref{prop:pin2-class b} by \cite{N2}*{Theorem 1.3}.
    Set $X := M \# N \# Z$.
    
    By Proposition \ref{prop: upper bound}, we have
    \[
      \infs(X) \ge 32 \pi^2 \big( 2\chi(M) + 3\tau(M) \big) = 32 \pi^2 c_1^2(M).
    \]
    On the other hand, by \cite{MR2032500}*{Proposition 13}, we have
    \[
      \infs(X) \le \infs(M) + \infs(N) + \infs(Z) = 32\pi^2 c_1^2(M).
    \]
    Since $X$ has a $\Pin^-(2)$-basic class, $\yinv(X) \le 0$.
    Thus,
    \[
      \yinv(X) = - \sqrt{\infs(X)} = - 4\pi \sqrt{2c_1^2(M)}.
    \]
    This completes the proof.
  \end{proof}
   
  \begin{proof}[Proof of Theorem \ref{thm:Enriques}]
    Let $M$ be an Enriques surface.
    By \cite{N2}*{Theorem 1.13}, the $\Z$-valued $\Pin^-(2)$-monopole invariant of $mM$ is non-trivial for any $m \ge 2$.
    
    If $b_1(N) = 0$, by \cite{N2}*{Theorem 3.9}, the $\Z$-valued $\Pin^-(2)$-monopole invariant of $mM \# N$ is non-trivial.
    We remark that \cite{N2}*{Theorem 3.9} holds for the $\Z$-valued $\Pin^-(2)$-monopole invariant.
    If $b_1(N) > 0$, by using Proposition \ref{prop: Z surgery formula} as in the proof of Proposition \ref{prop:pin2-class a} or that of Proposition \ref{prop:pin2-class b}, we are reduced to the case when $b_1(N) = 0$.
    Thus, the $\Z$-valued $\Pin^-(2)$-monopole invariant of $mM \# N$ is non-trivial.
    In particular, we have
    \[
      \yinv(mM \# N) \le 0.
    \]
    On the other hand, we have
    \[
      0 \le \infs(mM \# N) \le m \infs(M) + \infs(N) = 0.
    \]
    Thus, $\yinv(mM \# N)=0$.
    
    Since $2\chi(mM \# N) + 3\tau(mM \# N)< 0$, by the Hitchin-Thorpe inequality, it does not admit Ricci-flat metrics.
    Consequently, it does not admit Riemannian metrics of non-negative scalar curvature.
  \end{proof}
  
\section{Obstructions to Einstein metrics}

  We begin by examining LeBrun's inequalities (Cf. \cite{MR1872548}*{Proposition 3.2}).
  
  \begin{lemma} \label{lemma: equality}
    Let $M$, $N$, and $Z$ satisfy the assumptions in Proposition \ref{prop:pin2-class a} or Proposition \ref{prop:pin2-class b}.
    If equality holds in either (\ref{eq:scalar}) or (\ref{eq:weyl}) for some Riemannian metric $g$ on $X := M \# N \# Z$, then $\mathfrak{a}^{+_g} = 0$.
  \end{lemma}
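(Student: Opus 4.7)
The plan is to argue by contradiction. Suppose that $\mathfrak{a}^{+_g} \neq 0$ and that equality holds in either (\ref{eq:scalar}) or (\ref{eq:weyl}). Proposition \ref{prop:scalar} then endows the double cover $\widetilde{X}$ with a K\"ahler (respectively, almost-K\"ahler) structure compatible with the pulled-back metric $\widetilde{g} = \pi^*g$, whose compatible form $\widetilde{\omega}$ satisfies $\iota^*\widetilde{\omega} = -\widetilde{\omega}$. In particular, $\widetilde{X}$ admits a symplectic structure.

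The remainder of the argument is to rule this out by exploiting the connected-sum topology of $\widetilde{X}$. A cut-and-paste analysis of the double cover $\pi \colon \widetilde{X} \to X$ identifies
\[
  \widetilde{X} \cong (M \# N) \# \widetilde{Z} \# (M \# N)
\]
in the setting of Proposition \ref{prop:pin2-class a}, where the cover is trivial over $M \# N$ and non-trivial over $Z$, and
\[
  \widetilde{X} \cong \widetilde{M} \# 2N \# \widetilde{Z} \# (S^1 \times S^3)
\]
in the setting of Proposition \ref{prop:pin2-class b}, where both restrictions of the cover are non-trivial and an extra $S^1 \times S^3$ summand appears from the cyclic identification of the two preimages of the $X_1$--$X_2$ neck. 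In both situations one can split $\widetilde{X}$ as a non-trivial connected sum $A \# B$ with $b_+(A), b_+(B) \geq 1$: take $A = M \# N$ in case (a), using $b_+(M \# N) \geq b_+(M) \geq 2$ on each side; take $A = \widetilde{M}$ in case (b), using $b_+(\widetilde{M}) = b_+(M) + b_+^{\ell_M}(M) \geq 2$, while the complementary factor $B$ inherits $b_+ \geq 1$ from any $S^2 \times \Sigma$ summand of positive genus or any $S^1 \times Y$ summand with $b_1(Y) \geq 1$ appearing in $Z$.

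The contradiction then follows from two classical inputs: Taubes' theorem guarantees that a symplectic $4$-manifold with $b_+ \geq 2$ carries a non-vanishing ordinary Seiberg-Witten invariant for its canonical $\Spin^c$-structure, whereas the standard neck-stretching argument forces every ordinary Seiberg-Witten invariant of a connected sum $A \# B$ with $b_+(A), b_+(B) \geq 1$ to vanish. Hence $\widetilde{X}$ cannot admit any symplectic structure, contradicting our hypothesis, and we conclude $\mathfrak{a}^{+_g} = 0$. The principal technical obstacle I anticipate is the careful tracking of the double-cover topology together with the verification of $b_+(B) \geq 1$ in case (b); the delicate edge case is when $Z$ is a connected sum of $S^1 \times Y_i$'s with every $b_1(Y_i) = 0$, in which the straightforward SW vanishing on $B$ fails and one must rule out symplecticness of $\widetilde{X}$ by invoking the $S^1 \times S^3$ summand together with either the K\"ahler surface classification (in the Kähler case) or a refined Taubes-type obstruction for symplectic manifolds with $b_1 > 0$ (in the almost-Kähler case).
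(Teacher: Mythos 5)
Your argument is essentially identical to the paper's: a contradiction obtained by feeding the equality case of Proposition \ref{prop:scalar} into Taubes' non-vanishing theorem for the (almost-)K\"ahler double cover $\widetilde{X}$, against the same two connected-sum decompositions $\widetilde{X}=M\#M\#N\#N\#\widetilde{Z}$ and $\widetilde{X}=\widetilde{M}\#N\#N\#\widetilde{Z}\#(S^1\times S^3)$ and the vanishing of ordinary Seiberg--Witten invariants of connected sums. The edge case you flag is genuine --- in the setting of Proposition \ref{prop:pin2-class b} with $Z$ built solely from $S^1\times Y_i$'s having $b_1(Y_i)=0$, the paper's assertion that $\widetilde{X}$ has two summands of positive $b_+$ is not literally true, and the correct repair is exactly the one you gesture at: a stretched-neck argument shows that the degree-zero invariant of any $A\#(S^1\times S^3)$ with $b_+(A)\ge 1$ vanishes, which already contradicts Taubes without invoking the K\"ahler classification --- so at that one point your proposal is more careful than the paper's own proof.
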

  \begin{proof}
    Suppose that equality holds and $\mathfrak{a}^{+_g} \ne 0$.
    Proposition \ref{prop:scalar} implies that the double cover $\widetilde{X} = \widetilde{M \# Z} $ admits an almost-K\"{a}hler structure; therefore, its ordinary Seiberg-Witten invariant is non-trivial \cite{MR1306023}.
    On the other hand,  $\widetilde{X} = M \# M \# N \# N \# \widetilde{Z}$ or $\widetilde{X} = \widetilde{M} \# N \# N \# \widetilde{Z} \# (S^1\times S^3)$ according as $M$ satisfies the assumptions of Proposition \ref{prop:pin2-class a} or those of Proposition \ref{prop:pin2-class b}; in either case, $\widetilde{X}$ has at least two connected-summands with positive $b_+$; thus, its ordinary Seiberg-Witten invariant is trivial.
    This is a contradiction.
  \end{proof}
  
  \begin{proposition} \label{prop: strict ineq}
    Let $M$, $N$, and $Z$ satisfy the assumptions in Proposition \ref{prop:pin2-class a} or Proposition \ref{prop:pin2-class b}.
    Then, we have a strict inequality
    \[
      \frac{1}{4\pi^2} \int_X \left(\frac{s_g^2}{24} + 2|W^+_g|^2\right) \,\dv_g > \frac{2}{3} \big( 2\chi(M) + 3\tau(M) \big)
    \]
    for any Riemannian metric $g$ on $X := M \# N \# Z$.
  \end{proposition}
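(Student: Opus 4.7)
The plan is to follow LeBrun's technique: combine the two curvature estimates of Proposition~\ref{prop:scalar} via an elementary pointwise inequality, and then promote the resulting non-strict bound to a strict one using Lemma~\ref{lemma: equality}.

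First, I would establish the pointwise bound
\[
  (s_g - \sqrt{6}|W^+_g|)^2 \le 27 \left( \frac{s_g^2}{24} + 2|W^+_g|^2 \right)
\]
by optimising the left-hand side over the one-parameter ratio $|W^+_g|/s_g$; the maximum value $27$ is attained precisely when $|W^+_g| = -\sqrt{6}\, s_g/48$, which in particular forces $s_g \le 0$ pointwise. Integrating this bound and chaining it with (\ref{eq:weyl}) together with Proposition~\ref{prop:pin2-class a} or Proposition~\ref{prop:pin2-class b} yields
\[
  27 \int_X \left( \tfrac{s_g^2}{24} + 2|W^+_g|^2 \right) \dv_g \;\ge\; \int_X (s_g - \sqrt{6}|W^+_g|)^2 \dv_g \;\ge\; 72\pi^2 (\mathfrak{a}^{+_g})^2 \;\ge\; 72\pi^2 \bigl(2\chi(M)+3\tau(M)\bigr).
\]
Dividing by $108\pi^2$ gives the non-strict form of the desired inequality.

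Next, I would argue for strictness by supposing equality throughout and deriving a contradiction. If $\mathfrak{a}^{+_g} \neq 0$, equality in (\ref{eq:weyl}) forces (via the equality case of Proposition~\ref{prop:scalar}) the pulled-back metric on the double cover to be almost-K\"ahler, directly contradicting Lemma~\ref{lemma: equality}. If $\mathfrak{a}^{+_g} = 0$, then $2\chi(M)+3\tau(M) \le 0$; the sub-case with strict inequality makes the right-hand side negative while the left-hand side is non-negative, so strict inequality is automatic. The only remaining borderline is $\mathfrak{a}^{+_g} = 0$ and $2\chi(M)+3\tau(M)=0$, where equality throughout the chain combined with the pointwise equality analysis forces $s_g \equiv 0$ and $W^+_g \equiv 0$. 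This I would exclude by a Weitzenb\"ock argument: on a scalar-flat metric every solution of the $\Pin^-(2)$-monopole equations must have $\Phi \equiv 0$ and hence be reducible, while the non-triviality of the $\Pin^-(2)$-monopole invariant together with $b_+^{\ell_X}(X) \ge 2$ forces an irreducible solution to exist for every Riemannian metric, a contradiction.

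The algebraic combination and the main equality step are routine; the delicate point is the borderline sub-case, where the Weitzenb\"ock formula and the existence of irreducible monopoles must be invoked to rule out scalar-flat anti-self-dual metrics on $X$.
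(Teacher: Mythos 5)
Your pointwise bound, the chain of inequalities, and the reduction of the equality case to $s_g \equiv 0$ and $W^+_g \equiv 0$ all agree with the paper (the optimisation you describe is exactly the Cauchy--Schwarz step, and your first two sub-cases are handled correctly; note that under equality in the whole chain the last link already forces $(\mathfrak{a}^{+_g})^2 = 2\chi(M)+3\tau(M)$, so only the borderline sub-case can actually occur). The gap is in how you dispose of that borderline sub-case. The assertion that non-triviality of the $\Pin^-(2)$-monopole invariant together with $b_+^{\ell_X}(X)\ge 2$ ``forces an irreducible solution to exist for every Riemannian metric'' is false, and it fails precisely where you need it. Non-vanishing of the invariant guarantees only that the moduli space of the unperturbed equations is non-empty for every metric: a limit of irreducible solutions taken along a sequence of generic metrics may perfectly well be reducible. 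Reducibles are excluded only when $\mathfrak{a}^{+_g}\neq 0$, whereas in your borderline case $\mathfrak{a}^{+_g}=0$, so solutions with $\Phi\equiv 0$ and $F_A^+=0$ do exist and your Weitzenb\"ock argument yields no contradiction. (Compare the ordinary Seiberg--Witten equations on a hyper-K\"ahler K3 surface: the invariant of the canonical $\Spin^c$-structure is non-trivial and $b_+=3$, yet the metric is scalar-flat and every solution is reducible.)

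The paper closes this case by a different route: since $X$ carries a $\Pin^-(2)$-basic class, it admits no metric of positive scalar curvature, so a scalar-flat metric on $X$ must in fact be Ricci-flat (by the standard trichotomy for scalar curvatures of metrics on a closed manifold); combined with $W^+_g\equiv 0$, the metric $g$ is then Ricci-flat and anti-self-dual, and Hitchin's theorem forces $X$ to be a finite quotient of a K3 surface or $T^4$, which the connected sum $M\#N\#Z$ is not. You should replace your final Weitzenb\"ock step with an argument of this kind, i.e.\ one that genuinely excludes scalar-flat anti-self-dual metrics on $X$ rather than relying on the existence of irreducible monopoles for the given metric.
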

  
  \begin{proof}
    Combined with Proposition \ref{prop:pin2-class a} or Proposition \ref{prop:pin2-class b}, the Cauchy-Schwarz inequality and LeBrun's curvature estimate (\ref{eq:weyl}) yield
    \begin{equation*}
      \begin{aligned}
      \frac{1}{4\pi^2} \int_X \left(\frac{s_g^2}{24} + 2|W^+_g|^2\right) \,\dv_g &\ge \frac{1}{4\pi^2} \frac{1}{27} \int_X \big(s_g - \sqrt{6}|W^+_g| \big)^2 \,\dv_g \\
      &\ge \frac{1}{4\pi^2} \frac{1}{27} \cdot 72\pi^2 (\mathfrak{a}^{
      +_g})^2 \\
      &\ge \frac{2}{3} \big(2\chi(M) + 3\tau(M)\big)
      \end{aligned}
    \end{equation*}
    for any Riemannian metric $g$ on $X$ (Cf. \cite{MR1872548}*{Proposition 3.1}).
    
    We remark that $X$ is not diffeomorphic to a finite quotient of a K3 surface or $T^4$; in particular, $X$ does not admit a Ricci-flat anti-self-dual metric \cite{MR0350657}.
    Suppose that equality holds for some Riemannian metric $g$ on $X$.
    By Lemma \ref{lemma: equality}, we have $\mathfrak{a}^{+_g} = 0$; therefore, $s_g = W^+_g = 0$.
    Note that $X$ does not admit a Riemannian metric of positive scalar curvature by Proposition \ref{prop:pin2-class a} or Proposition \ref{prop:pin2-class b}.
    Consequently, $g$ is Ricci-flat and anti-self-dual.
    This is a contradiction.
  \end{proof}

  Proposition \ref{prop: strict ineq} leads to a new obstruction to the existence of Einstein metrics (Cf. \cite{MR2032500}*{Section 6}).
  
  \begin{theorem}
    Let $M$, $N$, and $Z$ satisfy the assumptions in Proposition \ref{prop:pin2-class a} or Proposition \ref{prop:pin2-class b}.
    If $X := M \# N \# Z$ admits an Einstein metric, then
    \[
      \frac{1}{3} \big( 2\chi(M) + 3\tau(M) \big) > 4 - \big( 2\chi(N \# Z) + 3\tau(N \# Z) \big).
    \]
  \end{theorem}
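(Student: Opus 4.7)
The plan is to combine Proposition \ref{prop: strict ineq} with the classical Gauss--Bonnet--Hirzebruch identity for Einstein $4$-manifolds and the standard additivity formulae for $\chi$ and $\tau$ under connected sum. I assume $X = M \# N \# Z$ carries an Einstein metric $g$ and derive the desired inequality as a purely topological consequence.

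First I would recall the Gauss--Bonnet and signature formulae in dimension four:
\[
  2\chi(X) = \frac{1}{4\pi^2} \int_X \Bigl( \frac{s_g^2}{24} + |W^+_g|^2 + |W^-_g|^2 - \frac{|\tlRicci_g|^2}{2} \Bigr) \dv_g,
\]
\[
  3\tau(X) = \frac{1}{4\pi^2} \int_X \bigl( |W^+_g|^2 - |W^-_g|^2 \bigr) \dv_g,
\]
where $\tlRicci_g$ denotes the trace-free Ricci tensor. Since $g$ is Einstein, $\tlRicci_g \equiv 0$, and adding the two identities gives
\[
  2\chi(X) + 3\tau(X) = \frac{1}{4\pi^2} \int_X \Bigl( \frac{s_g^2}{24} + 2|W^+_g|^2 \Bigr) \dv_g.
\]
Proposition \ref{prop: strict ineq} then yields the strict inequality
\[
  2\chi(X) + 3\tau(X) > \frac{2}{3} \bigl( 2\chi(M) + 3\tau(M) \bigr).
\]

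Next I would use the additivity rules $\chi(A \# B) = \chi(A) + \chi(B) - 2$ and $\tau(A \# B) = \tau(A) + \tau(B)$ to expand
\[
  2\chi(X) + 3\tau(X) = \bigl( 2\chi(M) + 3\tau(M) \bigr) + \bigl( 2\chi(N \# Z) + 3\tau(N \# Z) \bigr) - 4.
\]
Substituting this into the strict inequality above and moving the $M$-contribution to the left-hand side produces
\[
  \frac{1}{3} \bigl( 2\chi(M) + 3\tau(M) \bigr) > 4 - \bigl( 2\chi(N \# Z) + 3\tau(N \# Z) \bigr),
\]
which is precisely the claim.

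There is no genuine obstacle here once Proposition \ref{prop: strict ineq} is in place; the only care needed is the sign bookkeeping in the Gauss--Bonnet--Hirzebruch combination and the use of the \emph{strict} inequality from Proposition \ref{prop: strict ineq} (as opposed to the weaker non-strict bound coming only from \eqref{eq:scalar} and \eqref{eq:weyl}), which is exactly what forces the strict inequality in the conclusion.
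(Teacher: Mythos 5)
Your proposal is correct and follows essentially the same route as the paper: the Gauss--Bonnet and signature formulae combined under the Einstein condition to identify $2\chi(X)+3\tau(X)$ with $\frac{1}{4\pi^2}\int_X \bigl(\frac{s_g^2}{24}+2|W^+_g|^2\bigr)\,\dv_g$, followed by Proposition \ref{prop: strict ineq} and the connected-sum additivity of $\chi$ and $\tau$. The only difference is that you spell out the two curvature integrals separately before adding them, whereas the paper quotes the combined identity directly; the substance is identical.
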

  
  \begin{proof}
    We first note that
    \begin{equation*}
    \begin{split}
        2\chi(X) + 3\tau(X) &=  2\big( \chi(M) + \chi(N \#Z ) -2 \big) + 3\big( \tau(M) + \tau(N \# Z) \big) \\
        &= 2\chi(M) + 3\tau(M) + 2\chi(N \# Z) + 3\tau(N \# Z) - 4.
    \end{split}
    \end{equation*}
    By the Chern-Gauss-Bonnet formula and the Hirzebruch signature theorem, if $X$ admits an Einstein metric, we have
    \[
      2\chi(X) + 3\tau(X) = \frac{1}{4\pi^2} \int_X \left(\frac{s_g^2}{24} + 2|W^+_g|^2\right) \,\dv_g.
    \]
    By Proposition \ref{prop: strict ineq}, we have a strict inequality
    \[
      \frac{1}{4\pi^2} \int_X \left(\frac{s_g^2}{24} + 2|W^+_g|^2\right) \,\dv_g > \frac{2}{3} \big( 2\chi(M) + 3\tau(M) \big).
    \]
    Thus, we have
    \[
      2\chi(M) + 3\tau(M) + 2\chi(N \# Z) + 3\tau(N \# Z) - 4 > \frac{2}{3} \big( 2\chi(M) + 3\tau(M) \big).
    \]
    The proof is completed by rearranging terms.
  \end{proof}

  \begin{theorem}
    Let $M$, $N$, and $Z$ satisfy the assumptions in Proposition \ref{prop:pin2-class a} or Proposition \ref{prop:pin2-class b}.
    If $X := M \# N \# Z$ admits an anti-self-dual Einstein metric, then
    \[
      \frac{1}{4} \big( 2\chi(M) + 3\tau(M) \big) > 4 - \big( 2\chi(N \# Z) + 3\tau(N \# Z) \big).
    \]    
  \end{theorem}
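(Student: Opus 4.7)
The plan is to follow the template of the preceding theorem, but to exploit the stronger anti-self-dual hypothesis so that LeBrun's estimate (\ref{eq:weyl}) can be fed in directly, bypassing the Cauchy--Schwarz trick underlying Proposition \ref{prop: strict ineq}.

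First, I would write down the relevant topological identity. For an anti-self-dual Einstein metric $g$, both $W^+_g$ and the traceless Ricci tensor $\tlRicci$ vanish, so the combination of the Chern--Gauss--Bonnet formula and the Hirzebruch signature theorem used in the previous theorem collapses to
\[
  2\chi(X) + 3\tau(X) = \frac{1}{96\pi^2}\int_X s_g^2 \,\dv_g.
\]
Next, I would apply Proposition \ref{prop:pin2-class a} (or Proposition \ref{prop:pin2-class b}, according to which hypotheses $M$ satisfies) to produce a $\Pin^-(2)$-basic class $\mathfrak{a}$ satisfying $(\mathfrak{a}^{+_g})^2 \ge 2\chi(M) + 3\tau(M)$, and LeBrun's estimate (\ref{eq:weyl}), which under $W^+_g = 0$ reduces to $\int_X s_g^2\,\dv_g \ge 72\pi^2 (\mathfrak{a}^{+_g})^2$. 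Chaining the two yields the non-strict bound
\[
  2\chi(X) + 3\tau(X) \ge \tfrac{3}{4}\bigl(2\chi(M) + 3\tau(M)\bigr).
\]

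The main obstacle is upgrading this to a \emph{strict} inequality, which I would handle in the same spirit as the end of Proposition \ref{prop: strict ineq}. Suppose equality holds. If $\mathfrak{a}^{+_g} \ne 0$, then equality in (\ref{eq:weyl}) is excluded by Lemma \ref{lemma: equality}. Otherwise $\mathfrak{a}^{+_g} = 0$, and the chain forces $\int_X s_g^2\,\dv_g = 0$, so $s_g \equiv 0$; combined with $W^+_g = 0$ and $\tlRicci = 0$, the metric $g$ would be Ricci-flat and anti-self-dual, contradicting the observation recorded in the proof of Proposition \ref{prop: strict ineq} that $X$ is not diffeomorphic to any finite quotient of a K3 surface or of $T^4$.

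To finish, I would substitute the connected-sum identity
\[
  2\chi(X) + 3\tau(X) = 2\chi(M) + 3\tau(M) + 2\chi(N \# Z) + 3\tau(N \# Z) - 4
\]
already used in the proof of the preceding theorem into the strict inequality $2\chi(X) + 3\tau(X) > \tfrac{3}{4}\bigl(2\chi(M) + 3\tau(M)\bigr)$ and rearrange to arrive at
\[
  \tfrac{1}{4}\bigl(2\chi(M) + 3\tau(M)\bigr) > 4 - \bigl(2\chi(N \# Z) + 3\tau(N \# Z)\bigr),
\]
which is the claimed bound.
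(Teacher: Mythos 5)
Your proposal is correct and follows essentially the same route as the paper: the paper's proof invokes the anti-self-dual Einstein form of the Gauss--Bonnet/signature identity and asserts the strict bound $\int_X s_g^2\,\dv_g > 72\pi^2\big(2\chi(M)+3\tau(M)\big)$ ``by the same method as in Proposition \ref{prop: strict ineq} using (\ref{eq:weyl}) and Lemma \ref{lemma: equality}'', which is precisely the equality analysis you spell out (Lemma \ref{lemma: equality} forces $\mathfrak{a}^{+_g}=0$, hence $s_g\equiv 0$ and a Ricci-flat anti-self-dual metric, which is excluded). Your observation that the anti-self-dual hypothesis lets (\ref{eq:weyl}) be used directly without the Cauchy--Schwarz step is exactly what the paper does implicitly.
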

  
  \begin{proof}
    We first note that
    \begin{equation*}
    \begin{split}
        2\chi(X) + 3\tau(X) &=  2\big( \chi(M) + \chi(N \#Z ) -2 \big) + 3\big( \tau(M) + \tau(N \# Z) \big) \\
        &= 2\chi(M) + 3\tau(M) + 2\chi(N \# Z) + 3\tau(N \# Z) - 4.
    \end{split}
    \end{equation*}
    By the Chern-Gauss-Bonnet formula and the Hirzebruch signature theorem, if $X$ admits an anti-self-dual Einstein metric, we have
    \[
      2\chi(X) + 3\tau(X) = \frac{1}{4\pi^2}\int_X \frac{s_g^2}{24} \,\dv_g = \frac{1}{96\pi^2}\int_X s_g^2 \,\dv_g.
    \]
    We have a strict inequality
    \[
      \int_X s_g^2 \,\dv_g > 72\pi^2  \big( 2\chi(M) + 3\tau(M) \big) = 96\pi^2 \cdot \frac{3}{4} \big( 2\chi(M) + 3\tau(M) \big),
    \]
    which follows by the same method as in Proposition \ref{prop: strict ineq} using LeBrun's curvature estimate (\ref{eq:weyl}) and Lemma \ref{lemma: equality}.
    Thus, we have
    \[
      2\chi(M) + 3\tau(M) + 2\chi(N \# Z) + 3\tau(N \# Z) - 4 > \frac{3}{4} \big( 2\chi(M) + 3\tau(M) \big).
    \]
    The proof is completed by rearranging terms.
  \end{proof}

  \begin{example}
    Mumford constructed a compact complex surface $K$ of general type that is homeomorphic to the complex projective plane \cite{MR527834}.
    Let $M$ be a closed symplectic manifold with $b_+(M) \ge 2$.
    Let $Z$ be a connected sum of arbitrary positive number of $4$-manifolds, each of which belongs to one of the following types:
    \begin{enumerate}
      \item $S^2\times\Sigma$, where $\Sigma$ is a compact Riemann surface with positive genus,
      \item $S^1\times Y$, where $Y$ is a closed oriented $3$-manifold.
    \end{enumerate}
    Then, $M \# m\overline{\CP^2} \# n\overline{K} \# Z$ does not admit an Einstein metric if
    \[
      4 - 5(n+m)  \ge \big( 2\chi(Z) + 3\tau(Z) \big) + \frac{1}{3} \big( 2\chi(M) + 3\tau(M) \big),
    \]
    and it does not admit an anti-self-dual Einstein metric if
    \[
      4 - 5(n+m)  \ge \big( 2\chi(Z) + 3\tau(Z) \big) + \frac{1}{4} \big( 2\chi(M) + 3\tau(M) \big).
    \]
  \end{example}
  
  We end this section by examining an equality related to Proposition \ref{prop: strict ineq}, the proof of which is worth mentioning here although it will not play any role in our work.
  
  \begin{proposition}
    Let $\pi \colon \widetilde{M} \to M$ satisfy the the assumptions in Proposition \ref{prop:pin2-class b}.
    If there exists a Riemannian metric $g$ on $M$ that satisfies
    \[
      \frac{1}{4\pi^2} \int_M \left(\frac{s_g^2}{24} + 2|W^+_g|^2\right) \,\dv_g = \frac{2}{3} \big( 2\chi(M) + 3\tau(M) \big),
    \]
    then $(\widetilde{M}, \pi^* g)$ is a K3 surface or $T^4$ with hyper-K\"ahler metric and the covering transformation of $\widetilde{M}$ is anti-holomorphic; moreover, $M$ is an Enriques surface if $\widetilde{M}$ is a K3 surface.
  \end{proposition}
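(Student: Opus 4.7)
The plan is to trace through the chain of inequalities from the proof of Proposition \ref{prop: strict ineq}, applied now to $M$ itself rather than to $M\#N\#Z$:
\[
  \frac{1}{4\pi^2}\int_M \Bigl(\frac{s_g^2}{24}+2|W^+_g|^2\Bigr)\dv_g \ge \frac{1}{108\pi^2}\int_M\bigl(s_g-\sqrt{6}|W^+_g|\bigr)^2\dv_g \ge \tfrac{2}{3}(\mathfrak{a}^{+_g})^2 \ge \tfrac{2}{3}\bigl(2\chi(M)+3\tau(M)\bigr),
\]
and to read off what each equality forces. The first step is a pointwise Cauchy--Schwarz whose defect is the perfect square $27(s^2/24+2|W^+|^2)-(s-\sqrt{6}|W^+|)^2 = \tfrac{1}{8}(s+8\sqrt{6}|W^+|)^2$, so equality gives $s_g = -8\sqrt{6}|W^+_g|$ almost everywhere; the second is LeBrun's basic-class estimate (\ref{eq:weyl}); the third is equality iff the harmonic representative of $\mathfrak{a}$ is $g$-self-dual, i.e.\ $(\mathfrak{a}^{+_g})^2 = 2\chi(M)+3\tau(M)$.

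Combining with the Chern--Gauss--Bonnet and Hirzebruch signature identity
\[
  \frac{1}{4\pi^2}\int_M\Bigl(\frac{s_g^2}{24}+2|W^+_g|^2\Bigr)\dv_g = \bigl(2\chi(M)+3\tau(M)\bigr)+\frac{1}{8\pi^2}\int_M|\tlRicci_g|^2\dv_g,
\]
the assumed equality rearranges to $\int_M|\tlRicci_g|^2\dv_g = -\tfrac{8\pi^2}{3}(2\chi(M)+3\tau(M))$. Since $(\mathfrak{a}^{+_g})^2 \ge 0$, this forces $2\chi(M)+3\tau(M)=0$, so $g$ is Einstein and $\mathfrak{a}^{+_g}=0$. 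The equality in (\ref{eq:weyl}) now collapses to $s_g = \sqrt{6}|W^+_g|$ pointwise, which together with $s_g = -8\sqrt{6}|W^+_g|$ forces $W^+_g \equiv 0$ and $s_g \equiv 0$. Hence $\widetilde g := \pi^*g$ is a Ricci-flat, anti-self-dual metric on $\widetilde M$.

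Next I would recognise $(\widetilde M,\widetilde g)$ as compact hyper-K\"ahler. The Weitzenb\"ock formula $\nabla^*\nabla \omega = 2W^+\omega - (s/3)\omega$ for a harmonic self-dual 2-form reduces under $s_{\widetilde g}=W^+_{\widetilde g}=0$ to $\nabla^*\nabla\omega=0$, so every such $\omega$ is parallel. The transfer isomorphism yields $b_+(\widetilde M) \ge b_+^{\ell_M}(M) \ge 2$; since any two orthonormal parallel members of $\Omega^+$ generate a third via Clifford multiplication, the entire three-dimensional bundle $\Omega^+_{\widetilde g}$ is parallel, so $\widetilde g$ is hyper-K\"ahler. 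The classification of compact hyper-K\"ahler 4-manifolds then forces $\widetilde M$ to be a K3 surface or $T^4$.

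Finally, the covering involution $\iota$ is a free orientation-preserving isometry, so $\iota^*$ is an orthogonal involution of the three-dimensional space of parallel self-dual 2-forms whose $(-1)$-eigenspace has dimension $b_+^{\ell_M}(M) \ge 2$. Any unit $\omega$ in this eigenspace determines a compatible complex structure $J$ with $\iota^*J = -J$, so $\iota$ is anti-holomorphic with respect to $J$. When $\widetilde M$ is a K3 surface, the complementary one-dimensional $(+1)$-eigenspace picks out a complex structure $I$ with respect to which $\iota$ is a free holomorphic involution acting as $-1$ on $H^{2,0}(\widetilde M,I)$, which is the defining property of an Enriques involution, giving $M = \widetilde M/\iota$ as an Enriques surface. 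The main obstacle is the classification step above: bootstrapping the parallelism of two harmonic self-dual 2-forms up to a full hyper-K\"ahler triple, and thereby excluding intermediate quotients such as an Enriques surface or a non-$T^4$ Bieberbach manifold as candidates for $\widetilde M$, relies essentially on the hypothesis $b_+^{\ell_M}(M)\ge 2$.
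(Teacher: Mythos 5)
Your argument follows essentially the same route as the paper's proof, which is only a two-line sketch citing LeBrun's Proposition~3.2 of \cite{MR1872548} and ``Donaldson's trick''; you have fleshed out both halves. The quantitative part checks out: the Cauchy--Schwarz defect $\tfrac18\bigl(s_g+8\sqrt6|W^+_g|\bigr)^2$, the Gauss--Bonnet bookkeeping forcing $2\chi(M)+3\tau(M)=0$ and $\mathfrak{a}^{+_g}=0$, hence $s_g\equiv0$, $W^+_g\equiv0$ and $g$ Einstein, are all correct, as is the Weitzenb\"ock/holonomy argument that upgrades $b_+(\widetilde{M})\ge b_+^{\ell_M}(M)\ge2$ to a full parallel hyper-K\"ahler triple and thence to $\widetilde{M}$ being a K3 surface or $T^4$, with $\iota$ anti-holomorphic for any complex structure coming from the $(-1)$-eigenspace.

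The one genuine gap is in your final paragraph: you assert that the $(+1)$-eigenspace of $\iota^*$ on the three-dimensional space of parallel self-dual $2$-forms is one-dimensional, i.e.\ that $b_+^{\ell_M}(M)=2$ exactly. Nothing you have said excludes $b_+^{\ell_M}(M)=3$, in which case $\iota^*$ would act as $-\mathrm{id}$ on $H^+(\widetilde{M})$, there would be no invariant parallel complex structure $I$, and the Enriques conclusion would have nothing to run on. Ruling this out is precisely the content of the ``Donaldson's trick'' step the paper invokes: $\iota^*$ acts on the parallel sections of $\Lambda^+$ through (parallel transport composed with) the differential of an orientation-preserving isometry, hence through $SO(3)$, and $-\mathrm{id}\notin SO(3)$; every element of $SO(3)$ fixes a nonzero vector, which supplies the parallel complex structure $I$ for which $\iota$ is holomorphic and completes the identification of $M$ as an Enriques surface. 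Add that one observation and your proof is complete.
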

  
  \begin{proof}
     It follows from a similar argument as in \cite{MR1872548}*{Proposition 3.2} that  $(\widetilde{M}, \pi^*g)$ is a K3 surface or $T^4$ with hyperK\"ahler metric, and that the covering transformation is anti-holomorphic.
     By ``Donaldson's trick'' \citelist{\cite{MR1171888} \cite{MR1795406}*{Section 15.1}}, we can show that there exists another complex structure on $\widetilde{M}$ compatible with $\pi^* g$ for which the covering transformation is holomorphic; in particular, $M$ is an Enriques surface if $\widetilde{M}$ is a K3 surface.
  \end{proof}
  
\section{Obstructions to long-time Ricci flows}

  Recall that a long-time solution of the normalised Ricci flow is a family of Riemannian metrics that satisfies
  \[
    \frac{\partial}{\partial t} g(t) = -2 \Ric_{g(t)} + \frac{2}{m} \left( \frac{\int_X s_{g(t)} \,\dv_{g(t)}}{\int_X \dv_{g(t)}} \right) g(t)
  \]
  for $t \in [0,\infty)$.
  Proposition \ref{prop: strict ineq} also leads to a new obstruction to the existence of long-time solutions of the normalised Ricci flow with uniformly bounded scalar curvature (Cf. \cite{ishida}*{Section 5}).
  
  \begin{lemma} \label{lemma: yamabe negative}
    Let $M$, $N$, and $Z$ satisfy the assumptions in Proposition \ref{prop:pin2-class a} or Proposition \ref{prop:pin2-class b}.
    If $X := M \# N \# Z$ admits a long-time solution of the normalised Ricci flow with uniformly bounded scalar curvature, then we have $\yinv(X) < 0$.
  \end{lemma}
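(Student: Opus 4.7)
The plan is to argue by contradiction. Since $X$ carries a $\Pin^-(2)$-basic class by Proposition \ref{prop:pin2-class a} or Proposition \ref{prop:pin2-class b}, LeBrun's estimate (\ref{eq:scalar}) forces $X$ to admit no Riemannian metric of positive scalar curvature, so $\yinv(X) \le 0$. It therefore suffices to exclude $\yinv(X) = 0$; suppose for contradiction that this holds. By the formula at the start of Section \ref{section: Yamabe} we then have $\infs(X) = 0$, and Proposition \ref{prop: upper bound} forces $2\chi(M) + 3\tau(M) \le 0$.

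Let $g(t)$ be a long-time NRF solution on $X$ with $|s_{g(t)}| \le K$. Since NRF conserves volume, $\int_X s_{g(t)}^2\,dv_{g(t)} \le K^2 \vol(g(0))$ uniformly. Following the strategy of \cite{ishida}*{Section 5}, I would combine Perelman's monotonicity of the normalised lowest eigenvalue $\lambda(g) \vol(g)^{1/2}$ of the conformal Laplacian $-4\Delta + s$, whose supremum over all metrics equals $\yinv(X) = 0$ in the non-positive regime, with the time-integrability of $\int_X |\tlRicci_{g(t)}|^2\,dv_{g(t)}$ (a standard consequence of the scalar-curvature evolution under the uniform bound) to extract a sequence $t_i \to \infty$ along which
\[
  \int_X s_{g(t_i)}^2\,dv_{g(t_i)} \to 0 \quad\text{and}\quad \int_X |\tlRicci_{g(t_i)}|^2\,dv_{g(t_i)} \to 0.
\]

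The Chern-Gauss-Bonnet and Hirzebruch signature identities then give $\int_X 2|W^+_{g(t_i)}|^2\,dv_{g(t_i)} \to 4\pi^2(2\chi(X) + 3\tau(X))$ along the subsequence. Expanding the square in LeBrun's Weyl estimate (\ref{eq:weyl}) and invoking Cauchy-Schwarz together with $\int s^2 \to 0$ and the lower bound $(\mathfrak{a}^{+_g})^2 \ge 2\chi(M) + 3\tau(M)$ from Proposition \ref{prop:pin2-class a} or \ref{prop:pin2-class b} yields in the limit a numerical inequality of the form $2\chi(X) + 3\tau(X) \ge 6\bigl(2\chi(M) + 3\tau(M)\bigr)$. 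Combined with the strict per-metric inequality of Proposition \ref{prop: strict ineq} and Lemma \ref{lemma: equality}, which rules out equality being attained by any metric on $X$, this should produce the desired contradiction.

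The main obstacle is precisely the asymptotic analysis of the NRF: rigorously producing the subsequential limits $\int s^2\,dv \to 0$ and $\int |\tlRicci|^2\,dv \to 0$ from the uniform scalar bound, and then converting the strict per-metric Proposition \ref{prop: strict ineq} into a sharp contradiction in the limit. The latter most likely requires a compactness result for $\{g(t_i)\}$ so that Lemma \ref{lemma: equality} applies to a subsequential limit metric on $X$; both ingredients are the content of the techniques of \cite{ishida}*{Section 5}.
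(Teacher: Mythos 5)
Your opening moves match the paper: the $\Pin^-(2)$-basic class gives $\yinv(X)\le 0$, and if $\yinv(X)=0$ then $\infs(X)=0$, so Proposition \ref{prop: upper bound} forces $2\chi(M)+3\tau(M)\le 0$. The problem is that from this point on your argument never reaches a contradiction. Once $2\chi(M)+3\tau(M)\le 0$, every lower bound you invoke --- $72\pi^2(\mathfrak{a}^{+_g})^2\ge 72\pi^2\big(2\chi(M)+3\tau(M)\big)$ in (\ref{eq:weyl}), your limiting inequality $2\chi(X)+3\tau(X)\ge 6\big(2\chi(M)+3\tau(M)\big)$, and the threshold $\frac{2}{3}\big(2\chi(M)+3\tau(M)\big)$ in Proposition \ref{prop: strict ineq} --- is non-positive and hence carries no information, so nothing in your chain of inequalities contradicts anything else. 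There is also a circularity: the time-integrability of $\int_X|\tlRicci_{g(t)}|^2\,\dv_{g(t)}$ is \emph{not} a standard consequence of the uniform scalar bound alone; in this paper it is deduced (in the theorem following this lemma) from $\sup_t\min_x s_{g(t)}<0$ via \cite{ishida}*{Proposition 5} and \cite{MR2357999}*{Lemma 3.1}, i.e.\ from $\yinv(X)<0$, the very statement being proved. Likewise $\infs(X)=0$ only says the infimum of $\int_X s_g^2\,\dv_g$ over \emph{all} metrics vanishes; it does not give $\int_X s_{g(t_i)}^2\,\dv_{g(t_i)}\to 0$ along the flow.

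The missing idea is the Hitchin--Thorpe-type inequality for long-time solutions of the normalised Ricci flow with uniformly bounded scalar curvature: by \cite{MR2289603}*{Theorem A} and \cite{MR2869163}*{Theorem 1.1}, the existence of such a solution forces $2\chi(X)-3|\tau(X)|\ge\frac{1}{96\pi^2}\yinv(X)^2\ge 0$, hence $2\chi(X)+3\tau(X)\ge 0$. Since $2\chi(N\#Z)+3\tau(N\#Z)<0$ and $2\chi(X)+3\tau(X)=2\chi(M)+3\tau(M)+2\chi(N\#Z)+3\tau(N\#Z)-4$, this yields $2\chi(M)+3\tau(M)>0$; then Proposition \ref{prop: upper bound} gives $\infs(X)\ge 32\pi^2\big(2\chi(M)+3\tau(M)\big)>0$ and so $\yinv(X)=-\sqrt{\infs(X)}<0$. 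In other words, the flow hypothesis is used to exclude $2\chi(M)+3\tau(M)\le 0$ \emph{before} the curvature estimates are applied, which is exactly the step your contradiction scheme lacks. (If you could rigorously produce $t_i$ with $\int_X s_{g(t_i)}^2\,\dv_{g(t_i)}\to 0$ and $\int_X|\tlRicci_{g(t_i)}|^2\,\dv_{g(t_i)}\to 0$, the Gauss--Bonnet identity alone would already give $2\chi(X)+3\tau(X)=\lim\frac{1}{2\pi^2}\int_X|W^+_{g(t_i)}|^2\,\dv_{g(t_i)}\ge 0$ and you could finish as above, with no need for the Weyl estimate --- but those asymptotics are essentially the content of the cited results, not an input you may assume.)
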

  
  \begin{proof}
    By Proposition \ref{prop:pin2-class a} or Proposition \ref{prop:pin2-class b}, $X$ has a $\Pin^-(2)$-basic class; hence, $\yinv(X) \le 0$.
    Then, by \cite{MR2289603}*{Theorem A} and \cite{MR2869163}*{Theorem 1.1}, we have a Hitchin-Thorpe type inequality
    \[
      2\chi(X) - 3|\tau(X)| \ge \frac{1}{96\pi^2} \yinv(X)^2.
    \]
    Thus, $2\chi(X) + 3\tau(X) \ge 0$.
    Note that $2\chi(N \# Z) + 3\tau(N \# Z) < 0$.
    Thus, we get
    \[
      2\chi(M) + 3\tau(M) > 0.
    \]
    By Proposition \ref{prop: upper bound}, we have
    \[
      \yinv(X) = - \infs(X) \le - 32\pi^2 \big( 2\chi(M) + 3\tau(M) \big) < 0.
    \]
    This completes the proof.
  \end{proof}
  
  \begin{theorem}
    Let $M$, $N$, and $Z$ satisfy the assumptions in Proposition \ref{prop:pin2-class a} or Proposition \ref{prop:pin2-class b}.
    If $X := M \# N \# Z$ admits a long-time solution of the normalised Ricci flow with uniformly bounded scalar curvature, then
    \[
      4 - \big( 2\chi(N \# Z) + 3\tau(N \# Z) \big) \le \frac{1}{3} \big( 2\chi(M) + 3\tau(M) \big).
    \]
  \end{theorem}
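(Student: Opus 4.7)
The plan is to upgrade the argument for the Einstein case to the Ricci-flow setting by replacing the pointwise Chern-Gauss-Bonnet identity with an asymptotic version valid along the flow. Concretely, for any Riemannian metric $g$ on $X$ the Chern-Gauss-Bonnet formula and the Hirzebruch signature theorem yield
\[
  2\chi(X) + 3\tau(X) = \frac{1}{4\pi^2}\int_X \left(\frac{s_g^2}{24} + 2|W^+_g|^2 - \frac{|\tlRicci_g|^2}{2}\right) \dv_g,
\]
where $\tlRicci_g$ is the traceless Ricci tensor. In the Einstein case this gives an equality with the Weyl-scalar term; in the long-time Ricci-flow case we will only have an asymptotic inequality, because the traceless Ricci term does not vanish identically.

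The first step is to invoke the Ricci-flow estimate from \cite{ishida}*{Section 5} (in the same spirit as the results used in the proof of Lemma \ref{lemma: yamabe negative}). For a long-time solution $\{g(t)\}_{t\in[0,\infty)}$ of the normalised Ricci flow with uniformly bounded scalar curvature, this estimate controls the $L^2$-norm of the traceless Ricci tensor in an averaged sense as $t\to\infty$ and so produces
\[
  2\chi(X) + 3\tau(X) \ge \frac{1}{4\pi^2}\limsup_{t \to \infty} \int_X \left(\frac{s_{g(t)}^2}{24} + 2|W^+_{g(t)}|^2\right) \dv_{g(t)}.
\]
The second step is to apply Proposition \ref{prop: strict ineq} to each time slice $g(t)$, obtaining
\[
  \frac{1}{4\pi^2}\int_X \left(\frac{s_{g(t)}^2}{24} + 2|W^+_{g(t)}|^2\right) \dv_{g(t)} > \frac{2}{3}\big(2\chi(M) + 3\tau(M)\big)
\]
for every $t$; taking the limsup and combining with the previous display gives
\[
  2\chi(X) + 3\tau(X) \ge \frac{2}{3}\big(2\chi(M) + 3\tau(M)\big).
\]

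The final step is the topological bookkeeping used in the previous theorems of this section: the identity
\[
  2\chi(X) + 3\tau(X) = 2\chi(M) + 3\tau(M) + 2\chi(N \# Z) + 3\tau(N \# Z) - 4
\]
for a connected sum, substituted into the inequality above and rearranged, yields the claim. The main obstacle is invoking the correct Ricci-flow input, namely the asymptotic control on $\int_X |\tlRicci_{g(t)}|^2\,\dv_{g(t)}$ for long-time solutions with uniformly bounded scalar curvature; once this is in hand, everything else is a routine rearrangement that runs parallel to the proofs of the two Einstein obstruction theorems above.
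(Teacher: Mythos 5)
Your proposal follows essentially the same route as the paper: kill the traceless-Ricci term in the Gauss--Bonnet/signature identity asymptotically along the flow, apply Proposition \ref{prop: strict ineq} to each time slice, and rearrange via the connected-sum formula. Two points need tightening. First, your intermediate display
\[
  2\chi(X)+3\tau(X) \ \ge\ \frac{1}{4\pi^2}\limsup_{t\to\infty}\int_X\Bigl(\frac{s_{g(t)}^2}{24}+2|W^+_{g(t)}|^2\Bigr)\,\dv_{g(t)}
\]
is not justified by the available input. What one actually obtains is $\int_0^\infty\int_X|\tlRicci_{g(t)}|^2\,\dv_{g(t)}\,dt<\infty$ (\cite{MR2357999}*{Lemma 3.1}), which forces $\int_X|\tlRicci_{g(t)}|^2\,\dv_{g(t)}$ to be small only in time-average (equivalently, along a subsequence of times), not for all large $t$; hence the displayed inequality is valid with $\liminf$ (or with the paper's averages $\int_m^{m+1}\cdots\,dt$) but can fail as stated with $\limsup$. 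This is harmless for the conclusion, because the strict lower bound from Proposition \ref{prop: strict ineq} holds at \emph{every} time slice, so already the $\liminf$ is $\ge \tfrac{2}{3}\bigl(2\chi(M)+3\tau(M)\bigr)$. Second, the traceless-Ricci control is not unconditional: \cite{MR2357999}*{Lemma 3.1} requires $\sup_{t}\min_{x}s_{g(t)}(x)<0$, which is deduced from $\yinv(X)<0$ via \cite{ishida}*{Proposition 5}; so you must first invoke Lemma \ref{lemma: yamabe negative} (which you only allude to) before the asymptotic estimate on $\tlRicci$ becomes available. With these two repairs your argument coincides with the paper's.
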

  
  \begin{proof}
    By Lemma \ref{lemma: yamabe negative}, we have $\yinv(X) < 0$.
    Then, by \cite{ishida}*{Proposition 5}, we have
    \[
      \sup_{t \in [0,\infty)} \min_{x \in X} s_{g(t)}(x) < 0.
    \]
    Thus, by \cite{MR2357999}*{Lemma 3.1}, we have
    \[
      \int_0^{\infty} \int_X \big|\tlRicci_{g(t)}\big|^2 \,d\mu_{g(t)} \,dt < \infty,
    \]
    where we denote by $\tlRicci$ the traceless Ricci tensor.
    Hence, we have
    \begin{equation} \label{eq: tlricci}
      \lim_{m \to \infty} \int_m^{m+1} \int_X \big|\tlRicci_{g(t)}\big|^2 \,d\mu_{g(t)} \,dt = 0.
    \end{equation}
    By the Chern-Gauss-Bonnet formula and the Hirzebruch signature theorem, we have
    \[
      2\chi(X) + 3\tau(X) = \frac{1}{4\pi^2} \int_X \bigg( \frac{s_{g(t)}^2}{24} + 2|W^+_{g(t)}|^2 - \frac{\big|\tlRicci_{g(t)}\big|^2}{2} \bigg) \,d\mu_{g(t)}
    \]
    for any $t \in [0,\infty)$.
    Hence, we have
    \begin{equation} \label{eq: gauss-bonnet}
      2\chi(X) + 3\tau(X) = \frac{1}{4\pi^2} \int_m^{m+1} \int_X \bigg( \frac{s_{g(t)}^2}{24} + 2|W^+_{g(t)}|^2 - \frac{\big|\tlRicci_{g(t)}\big|^2}{2} \bigg) \,d\mu_{g(t)} \,dt
    \end{equation}
    for any $m \in [0,\infty)$.
    By (\ref{eq: tlricci}) and (\ref{eq: gauss-bonnet}), we have
    \[
      2\chi(X) + 3\tau(X) = \lim_{m \to \infty} \frac{1}{4\pi^2} \int_m^{m+1} \int_X \bigg( \frac{s_{g(t)}^2}{24} + 2|W^+_{g(t)}|^2 \bigg) \,d\mu_{g(t)} \,dt.
    \]
    On the other hand, by Lemma \ref{prop: strict ineq}, we have
    \[
      \frac{1}{4\pi^2} \int_X \bigg( \frac{s_{g(t)}^2}{24} + 2|W^+_{g(t)}|^2 \bigg) \,d\mu_{g(t)} > \frac{2}{3} \big( 2\chi(M) + 3\tau(M) \big)
    \]
    for any $t \in [0,\infty)$.
    Thus,
    \[
      \lim_{m \to \infty} \frac{1}{4\pi^2} \int_m^{m+1} \int_X \bigg( \frac{s_{g(t)}^2}{24} + 2|W^+_{g(t)}|^2 \bigg) \,d\mu_{g(t)} \,dt \ge \frac{2}{3} \big( 2\chi(M) + 3\tau(M) \big).
    \]
    Consequently, we get
    \[
      2\chi(X) + 3\tau(X) \ge \frac{2}{3} \big( 2\chi(M) + 3\tau(M) \big).
    \]
    This completes the proof.
  \end{proof}

\begin{bibdiv}
\begin{biblist}

\bib{MR2289603}{article}{
   author={Akutagawa, Kazuo},
   author={Ishida, Masashi},
   author={LeBrun, Claude},
   title={Perelman's invariant, Ricci flow, and the Yamabe invariants of
   smooth manifolds},
   journal={Arch. Math. (Basel)},
   volume={88},
   date={2007},
   number={1},
   pages={71--76},
   issn={0003-889X},
   review={\MR{2289603 (2007j:53070)}},
   doi={10.1007/s00013-006-2181-0},
}

\bib{MR2025298}{article}{
   author={Bauer, Stefan},
   author={Furuta, Mikio},
   title={A stable cohomotopy refinement of Seiberg-Witten invariants. I},
   journal={Invent. Math.},
   volume={155},
   date={2004},
   number={1},
   pages={1--19},
   issn={0020-9910},
   review={\MR{2025298 (2005c:57040)}},
   doi={10.1007/s00222-003-0288-5},
}

\bib{MR1085114}{article}{
   author={Besson, G.},
   author={Courtois, G.},
   author={Gallot, S.},
   title={Volume et entropie minimale des espaces localement sym\'etriques},
   language={French},
   journal={Invent. Math.},
   volume={103},
   date={1991},
   number={2},
   pages={417--445},
   issn={0020-9910},
   review={\MR{1085114 (92d:58027)}},
   doi={10.1007/BF01239520},
}

\bib{MR1795406}{book}{
   author={Degtyarev, A.},
   author={Itenberg, I.},
   author={Kharlamov, V.},
   title={Real Enriques surfaces},
   series={Lecture Notes in Mathematics},
   volume={1746},
   publisher={Springer-Verlag, Berlin},
   date={2000},
   pages={xvi+259},
   isbn={3-540-41088-0},
   review={\MR{1795406 (2001k:14100)}},
   doi={10.1007/BFb0103960},
}

\bib{MR710056}{article}{
   author={Donaldson, S. K.},
   title={An application of gauge theory to four-dimensional topology},
   journal={J. Differential Geom.},
   volume={18},
   date={1983},
   number={2},
   pages={279--315},
   issn={0022-040X},
   review={\MR{710056 (85c:57015)}},
}

\bib{MR1171888}{article}{
   author={Donaldson, S. K.},
   title={Yang-Mills invariants of four-manifolds},
   conference={
      title={Geometry of low-dimensional manifolds, 1},
      address={Durham},
      date={1989},
   },
   book={
      series={London Math. Soc. Lecture Note Ser.},
      volume={150},
      publisher={Cambridge Univ. Press, Cambridge},
   },
   date={1990},
   pages={5--40},
   review={\MR{1171888 (93f:57040)}},
}

\bib{MR1079726}{book}{
   author={Donaldson, S. K.},
   author={Kronheimer, P. B.},
   title={The geometry of four-manifolds},
   series={Oxford Mathematical Monographs},
   note={Oxford Science Publications},
   publisher={The Clarendon Press, Oxford University Press, New York},
   date={1990},
   pages={x+440},
   isbn={0-19-853553-8},
   review={\MR{1079726 (92a:57036)}},
}

\bib{MR2357999}{article}{
   author={Fang, Fuquan},
   author={Zhang, Yuguang},
   author={Zhang, Zhenlei},
   title={Non-singular solutions to the normalized Ricci flow equation},
   journal={Math. Ann.},
   volume={340},
   date={2008},
   number={3},
   pages={647--674},
   issn={0025-5831},
   review={\MR{2357999 (2009a:53112)}},
   doi={10.1007/s00208-007-0164-5},
}

\bib{MR2465077}{book}{
   author={Fr{\o}yshov, Kim A.},
   title={Compactness and gluing theory for monopoles},
   series={Geometry \& Topology Monographs},
   volume={15},
   publisher={Geometry \& Topology Publications, Coventry},
   date={2008},
   pages={viii+198},
   review={\MR{2465077 (2010a:57050)}},
}

\bib{FM}{article}{
   author={Furuta, Mikio},
   author={Matsuo, Shinichiroh},
   title={The perturbation of the Seiberg-Witten equations revisited},
   journal={J. Math. Soc. Japan},
   status={to appear},
   eprint={arXiv:1405.1219},
}

\bib{MR0350657}{article}{
   author={Hitchin, Nigel},
   title={Compact four-dimensional Einstein manifolds},
   journal={J. Differential Geometry},
   volume={9},
   date={1974},
   pages={435--441},
   issn={0022-040X},
   review={\MR{0350657 (50 \#3149)}},
}

\bib{ishida}{article}{
   author={Ishida, Masashi},
   title={The Normalized Ricci Flow on Four-Manifolds and Exotic Smooth Structures},
   journal={the Asian Journal of Mathematics},
   status={to appear},
   eprint={arXiv:0807.2169},
}

\bib{MR2032500}{article}{
   author={Ishida, Masashi},
   author={LeBrun, Claude},
   title={Curvature, connected sums, and Seiberg-Witten theory},
   journal={Comm. Anal. Geom.},
   volume={11},
   date={2003},
   number={5},
   pages={809--836},
   issn={1019-8385},
   review={\MR{2032500 (2005c:53042)}},
   doi={10.4310/CAG.2003.v11.n5.a1},
}

\bib{IS-1}{article}{
   author={Ishida, Masashi},
   author={Sasahira, Hirofumi},
   title={Stable cohomotopy Seiberg-Witten invariants of connected sums of four-manifolds with positive first Betti number, I: non-vanishing theorem},
   journal={International Journal of Mathematics},
   status={to appear},
   doi={10.1142/S0129167X15410049},
}

\bib{IS-2}{article}{
   author={Ishida, Masashi},
   author={Sasahira, Hirofumi},
   title={Stable cohomotopy Seiberg-Witten invariants of connected sums of four-manifolds with positive first Betti number, II: applications},
   status={submitted},
}

\bib{MR919505}{article}{
   author={Kobayashi, Osamu},
   title={Scalar curvature of a metric with unit volume},
   journal={Math. Ann.},
   volume={279},
   date={1987},
   number={2},
   pages={253--265},
   issn={0025-5831},
   review={\MR{919505 (89a:53048)}},
   doi={10.1007/BF01461722},
}

\bib{MR2039991}{article}{
   author={LeBrun, Claude},
   title={Einstein metrics, four-manifolds, and differential topology},
   conference={
      title={Surveys in differential geometry, Vol.\ VIII},
      address={Boston, MA},
      date={2002},
   },
   book={
      series={Surv. Differ. Geom., VIII},
      publisher={Int. Press, Somerville, MA},
   },
   date={2003},
   pages={235--255},
   review={\MR{2039991 (2005g:53078)}},
}

\bib{MR2494171}{article}{
   author={LeBrun, Claude},
   title={Four-manifolds, curvature bounds, and convex geometry},
   conference={
      title={Riemannian topology and geometric structures on manifolds},
   },
   book={
      series={Progr. Math.},
      volume={271},
      publisher={Birkh\"auser Boston, Boston, MA},
   },
   date={2009},
   pages={119--152},
   review={\MR{2494171 (2010h:53045)}},
   doi={10.1007/978-0-8176-4743-8\_6},
}

\bib{MR1386835}{article}{
   author={LeBrun, Claude},
   title={Four-manifolds without Einstein metrics},
   journal={Math. Res. Lett.},
   volume={3},
   date={1996},
   number={2},
   pages={133--147},
   issn={1073-2780},
   review={\MR{1386835 (97a:53072)}},
   doi={10.4310/MRL.1996.v3.n2.a1},
}

\bib{MR1674105}{article}{
   author={LeBrun, Claude},
   title={Kodaira dimension and the Yamabe problem},
   journal={Comm. Anal. Geom.},
   volume={7},
   date={1999},
   number={1},
   pages={133--156},
   issn={1019-8385},
   review={\MR{1674105 (99m:58056)}},
}

\bib{MR1872548}{article}{
   author={LeBrun, Claude},
   title={Ricci curvature, minimal volumes, and Seiberg-Witten theory},
   journal={Invent. Math.},
   volume={145},
   date={2001},
   number={2},
   pages={279--316},
   issn={0020-9910},
   review={\MR{1872548 (2002h:53061)}},
   doi={10.1007/s002220100148},
}

\bib{MR1487727}{article}{
   author={LeBrun, Claude},
   title={Yamabe constants and the perturbed Seiberg-Witten equations},
   journal={Comm. Anal. Geom.},
   volume={5},
   date={1997},
   number={3},
   pages={535--553},
   issn={1019-8385},
   review={\MR{1487727 (98j:58032)}},
}

\bib{MR527834}{article}{
   author={Mumford, D.},
   title={An algebraic surface with $K$ ample, $(K^{2})=9$, $p_{g}=q=0$},
   journal={Amer. J. Math.},
   volume={101},
   date={1979},
   number={1},
   pages={233--244},
   issn={0002-9327},
   review={\MR{527834 (80j:14032)}},
   doi={10.2307/2373947},
}

\bib{MR3118618}{article}{
   author={Nakamura, Nobuhiro},
   title={$\rm{Pin}^-(2)$-monopole equations and intersection forms with
   local coefficients of four-manifolds},
   journal={Math. Ann.},
   volume={357},
   date={2013},
   number={3},
   pages={915--939},
   issn={0025-5831},
   review={\MR{3118618}},
   doi={10.1007/s00208-013-0924-3},
}

\bib{N2}{article}{
   author={Nakamura, Nobuhiro},
   title={$\Pin^-(2)$-monopole invariants},
   journal={J. Differential Geom.},
   status={to appear},
   eprint={arXiv:1303.4870v2},
}

\bib{MR1863729}{article}{
   author={Ozsv{\'a}th, Peter},
   author={Szab{\'o}, Zolt{\'a}n},
   title={Higher type adjunction inequalities in Seiberg-Witten theory},
   journal={J. Differential Geom.},
   volume={55},
   date={2000},
   number={3},
   pages={385--440},
   issn={0022-040X},
   review={\MR{1863729 (2002j:57061)}},
}

\bib{MR2284407}{article}{
   author={Sasahira, Hirofumi},
   title={Spin structures on Seiberg-Witten moduli spaces},
   journal={J. Math. Sci. Univ. Tokyo},
   volume={13},
   date={2006},
   number={3},
   pages={347--363},
   issn={1340-5705},
   review={\MR{2284407 (2007i:57027)}},
}

\bib{MR994021}{article}{
   author={Schoen, Richard M.},
   title={Variational theory for the total scalar curvature functional for
   Riemannian metrics and related topics},
   conference={
      title={Topics in calculus of variations},
      address={Montecatini Terme},
      date={1987},
   },
   book={
      series={Lecture Notes in Math.},
      volume={1365},
      publisher={Springer, Berlin},
   },
   date={1989},
   pages={120--154},
   review={\MR{994021 (90g:58023)}},
   doi={10.1007/BFb0089180},
}

\bib{MR1306023}{article}{
   author={Taubes, Clifford Henry},
   title={The Seiberg-Witten invariants and symplectic forms},
   journal={Math. Res. Lett.},
   volume={1},
   date={1994},
   number={6},
   pages={809--822},
   issn={1073-2780},
   review={\MR{1306023 (95j:57039)}},
   doi={10.4310/MRL.1994.v1.n6.a15},
}

\bib{MR2869163}{article}{
   author={Zhang, Yuguang},
   author={Zhang, Zhenlei},
   title={A note on the Hitchin-Thorpe inequality and Ricci flow on
   4-manifolds},
   journal={Proc. Amer. Math. Soc.},
   volume={140},
   date={2012},
   number={5},
   pages={1777--1783},
   issn={0002-9939},
   review={\MR{2869163}},
   doi={10.1090/S0002-9939-2011-11084-0},
}

\end{biblist}
\end{bibdiv}

\end{document}